\newtheorem{theorem}{Theorem}[section]
\newtheorem{corollary}[theorem]{Corollary}
\newtheorem{lemma}[theorem]{Lemma}
\newtheorem{definition}[theorem]{Definition}
\newtheorem{fact}[theorem]{Fact}
\newtheorem{claim}[theorem]{Claim}
\newtheorem{proposition}[theorem]{Proposition}
\begin{document}

	\title{Spanning trees in graphs without large bipartite holes }
	
	\author{Jie Han  \thanks{School of Mathematics and Statistics and Center for Applied Mathematics, Beijing Institute of Technology, Beijing, China. Email: {\tt han.jie@bit.edu.cn.}}\and
		 Jie Hu\thanks{Center for Combinatorics and LPMC, Nankai University, Tianjin, 300071, China. Email: {\tt hujie@nankai.edu.cn}. }\and
		 Lidan Ping \thanks{School of Mathematics, Shandong University, China. Email: {\tt p1999pinglidan@126.com}. }\and
		 Guanghui Wang \thanks{School of Mathematics, Shandong University, China. Email: {\tt ghwang@sdu.edu.cn}. Research supported by Natural Science
		 	Foundation of China (12231018) and Young Taishan Scholars probgram of Shandong Province( 201909001).}\and
		Yi Wang \thanks{Data Science Institute, Shandong University, China. Email: {\tt 202012034@mail.sdu.edu.cn}.}\and
		 Donglei Yang\thanks{Data Science Institute, Shandong University, China. Email: {\tt dlyang@sdu.edu.cn}. Supported by the China Postdoctoral Science Foundation (2021T140413), Natural Science Foundation of China (12101365) and Natural Science Foundation of Shandong Province (ZR2021QA029).} }
	\date{\today}
	\maketitle
	\begin{abstract}

We show that for any $\varepsilon>0$ and $\Delta\in\mathbb{N}$, there exists  $\alpha>0$  such that  for sufficiently large $n$, every $n$-vertex graph $G$ satisfying that $\delta(G)\geq\varepsilon n$ and $e(X, Y)>0$ for every pair of disjoint vertex sets $X, Y\subseteq V(G)$ of size $\alpha n$ contains all spanning trees with maximum degree at most $\Delta$. 
This strengthens a result of B\"ottcher et al.

	\end{abstract}

	\section{Introduction}	
	Determining the minimum degree condition for the  existence of spanning structures is a central problem in extremal graph theory. The first result of this direction is Dirac's theorem \cite{dirac1952some} in 1952 which states that an $n$-vertex graph $G$ with $\delta(G)\ge\frac{n}{2}$  contains a	Hamiltonian cycle.
	Koml\'os, S\'ark\"ozy and Szemer\'edi \cite{komlos1995proof} proved that  an $n$-vertex graph $G$ with $\delta(G)\geq(\frac{1}{2}+o(1))n$ contains a copy of every  bounded-degree spanning tree, and in \cite{komlos2001spanning},
	the result is extended to trees with maximum degree  $O(\frac{n}{\rm {log} n})$. 
	Another notable extension is the bandwidth theorem of B\"ottcher, Schacht and Taraz \cite{2009Proof}, which finds the (asymptotically) optimal minimum degree condition forcing spanning subgraphs with bounded chromatic number and sublinear bandwidth and  resolves a conjecture  of Bollob\'as and Koml\'os \cite{komlos_1999}. 
	
	Note that the extremal graphs in this type of results all have large independent sets,  which makes them far from being typical.
	Hence a natural project is to study how the degree conditions drop if we  forbid large independent sets from the host graph. Balogh, Molla and Sharifzadeh \cite{balogh2016triangle} initiated this study by proving if $G$ is an $n$-vertex graph with $\delta(G)\geq(\frac{1}{2}+o(1))n$ and $\alpha(G)=o(n)$, then $G$ contains a $K_{3}$-factor.
	This result is asymptotically optimal and 
	weakens the bound $\delta(G)\geq\frac{2}{3}n$ from the Corr\'adi--Hajnal theorem \cite{K1963On}.  Knierim and Su \cite{KNIERIM202160} resolved the case $K_{r}$-factor for $r\geq 4$
	by showing $\delta(G)\geq(\frac{r-2}{r}+o(1))n$. 
	Nenadov and Pehova \cite{nenadov2020ramsey} further generalised  this  into  $\ell$-independence number and this inspires several recent works 
	\cite{2111.10512,2203.02169,han2021ramsey}.
	
		However, just by excluding  large independent sets is not enough to guarantee the existence of large \emph{connected} subgraph. 
	The union of two disjoint copies of $K_{\frac{n}{2}}$ has independence number two, but
	it does not contain any connected subgraphs with more than $\frac{n}{2}$ vertices. 
	Hence  it is necessary to  impose stronger conditions to overcome this.  
The following notion of bipartite hole was introduced by McDiarmid and Yolov~\cite{2017Hamilton}  in the study of Hamilton cycles. 
	
	Given two disjoint vertex sets $A$ and $B$ in a graph $G$, we use $E_{G}(A,B)$ to denote the set of edges joining $A$ and $B$, and let $e_{G}(A,B)=\vert E_{G}(A,B) \vert$.

	\begin{definition}[\cite{2017Hamilton}]
		\rm{Given a graph $G$, an \textit{$(s,t)$-bipartite-hole} in $G$ consists of two disjoint sets $S,\ T\subseteq V(G)$ with $\vert S \vert=s$ and $\vert T \vert=t$ such that $e_{G}(S,T)=0$.
		We define the \textit{bipartite-hole number $\widetilde{\alpha}(G)$} to be the maximum integer $r$ such that $G$ contains an $(s,t)$-bipartite-hole for every pair of nonnegative integers $s$ and $t$ with $s+t=r$.}
	\end{definition}

	In this paper, we adopt a slightly different notion of bipartite-hole number due to Nenadov and Pehova~\cite{nenadov2020ramsey}. 
	The \emph{bipartite independence number} $\alpha^*(G)$ is the maximum integer $t$ such that $G$ contains a $(t,t)$-bipartite-hole.	
	It is clear from the definition that  $2\alpha^*(G)+1\geq\widetilde{\alpha}(G)\geq \alpha^*(G)+1$.
	McDiarmid and Yolov~\cite{2017Hamilton} showed that $\delta(G)\ge\widetilde{\alpha}(G)$ is enough to force $G$ to be Hamiltonian and this result is sharp.
	Recently, Kim, Kim and Liu~\cite{2020Tree} studied the decomposition of an almost regular graph $G$ with $\alpha^*(G)=o(n)$ into almost spanning trees of bounded maximum degree.
	
	The main result of this paper is the following. We denote by $\mathcal{T}(n,\Delta)$ the family of all trees on $n$ vertices with maximum degree at most $\Delta$.

		\begin{theorem}\label{1}
		For each $ \varepsilon>0$ and $\Delta \in \mathbb{N} $, there exists $\alpha=\alpha(\varepsilon,\Delta)>0$ such that the following holds for sufficiently large $n \in \mathbb{N}$. Every $n$-vertex graph $G$ with $\delta(G)\ge\varepsilon n$ and $\alpha^*(G)<\alpha n$ is $\mathcal{T}(n,\Delta)$-universal, that is, $G$ contains every $T\in \mathcal{T}(n,\Delta)$ as a subgraph.
	\end{theorem}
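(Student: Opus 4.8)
The plan is to treat the hypothesis $\alpha^*(G)<\alpha n$ as a strong expansion/connectivity property and to embed $T$ by an absorption argument in the spirit of Montgomery's method for spanning trees in (pseudo)random graphs, rather than by a purely regularity-based scheme; the latter would lose information on the ``sparse'' parts that $G$ may contain while these still carry a large portion of $T$. First I would record the consequences of the hypotheses: (i) $G$ is at least $(\varepsilon-2\alpha)n$-connected, since a smaller separator would either leave a component of order $<\alpha n$ (forcing a vertex of degree $<\varepsilon n$) or two parts of order $\ge\alpha n$ with no edge between them (an $(\lceil\alpha n\rceil,\lceil\alpha n\rceil)$-bipartite hole); (ii) every $X\subseteq V(G)$ with $|X|\ge\alpha n$ dominates all but fewer than $\alpha n$ vertices; (iii) hence between any two disjoint $s$-sets with $s\ge\alpha n$ there is a matching of size $\ge s-\alpha n$ (remove covered vertices greedily); and (iv) since $\widetilde\alpha(G)\le2\alpha^*(G)+1<\delta(G)$ for $\alpha$ small (and $n$ large), $G$ is in fact Hamiltonian by McDiarmid--Yolov, which one may keep in reserve as a backbone. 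On the tree side I would use the standard dichotomy: every $T\in\mathcal T(n,\Delta)$ has either $\Omega_\Delta(n)$ leaves or $\Omega_\Delta(n)$ vertex-disjoint bare paths (paths all of whose internal vertices have degree $2$ in $T$) of a fixed bounded length; either way this supplies a family of $\Omega_\Delta(n)$ ``flexible'' positions.

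Fix $\gamma=\gamma(\varepsilon,\Delta)>0$ with $\alpha\ll\gamma\ll\varepsilon/\Delta$. Choose a random reservoir $\mathcal R\subseteq V(G)$ of size just below $\gamma n$; concentration gives that every vertex has $\ge\tfrac\varepsilon2|\mathcal R|$ neighbours in $\mathcal R$ and that $\mathcal R$ inherits (i)--(iii) up to constants. On the tree side, set aside a subforest $T_{\mathrm{flex}}\subseteq T$ on $\gamma n$ vertices built from flexible positions (a selection of leaves in the first case, or $\Omega(\gamma n)$ of the bare paths in the second), together with a reserved sub-collection to serve as absorbers; put $T'=T-V(T_{\mathrm{flex}})$. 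I would embed $T'$ into $G-\mathcal R$ covering all but a small set $U_0$, so that the unused vertices $U_0\cup\mathcal R$ number exactly $|V(T_{\mathrm{flex}})|$, by a breadth-first greedy procedure that maintains a candidate set for each not-yet-embedded vertex; these stay of linear size because, once the used set has order $\ge\alpha n$, it excludes fewer than $\alpha n$ candidates by (ii), while small sets expand by $\Omega(\varepsilon n)$ --- this is the point where one uses that $n$ is only a bounded multiple of $\delta(G)$. Crucially I would steer the images of the attachment vertices of $T_{\mathrm{flex}}$ (the $T$-neighbours of the flexible leaves, resp.\ the endpoints of the bare paths) to be spread out and to each have $\ge2\gamma n$ neighbours in $\mathcal R$. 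This bulk step is essentially an almost-spanning embedding and should be obtainable by adapting existing embeddings of trees in graphs without large bipartite holes (in the spirit of B\"ottcher et al.\ and of Kim--Kim--Liu).

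To finish I would embed $T_{\mathrm{flex}}$ onto the unused vertices $U_0\cup\mathcal R$, extending the partial embedding $\phi$. Form the auxiliary bipartite ``assignment'' graph on $U_0\cup\mathcal R$ and the flexible slots, joining $v$ to the slot of a flexible vertex $x$ exactly when $v$ is $G$-adjacent to $\phi$ of every already-embedded neighbour of $x$. By the steering step each slot sees $\ge\tfrac\varepsilon4\gamma n$ of these vertices and each vertex sees $\ge2\gamma n$ slots, so a defect form of Hall's theorem, again via the domination property (ii), produces an assignment missing only $O(\alpha n)$ vertices on each side. The $O(\alpha n)$ leftovers are then handled by the reserved absorbers: a leftover vertex is attached as an extra leaf at a reserved attachment vertex chosen $G$-adjacent to it, or, in the bare-path case, a reserved bare path is rerouted so as to pass through it --- possible since the two relevant $G$-neighbours are joined by a short path through currently unused vertices, using connectivity. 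A final count confirms that the resulting map is a bijection realising all of $T$.

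The main obstacle is the exact, genuinely-spanning finish. The hypothesis $\alpha^*(G)<\alpha n$ yields only ``an edge between any two $\alpha n$-sets'', so every matching/assignment step natively leaves a deficiency of order $\alpha n$; promoting an almost-spanning embedding to a spanning one therefore forces (a) $T_{\mathrm{flex}}$ to be built with enough structured redundancy to absorb an arbitrary $O(\alpha n)$-deficiency, and (b) the bulk embedding to deliver the uniform ``spreading'' of the attachment-vertex images while still keeping every candidate set of linear size --- pressures that must be balanced through the hierarchy $\alpha\ll\gamma\ll\varepsilon/\Delta$. The bare-path case is the more delicate one, since there absorption proceeds by path-insertion rather than leaf-attachment and must be coordinated with the expansion used in the bulk phase; I expect that designing a clean interface between the bulk embedding and the absorption is the technical heart of the argument.
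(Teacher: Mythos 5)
Your high-level plan shares two load-bearing ideas with the paper --- the leaf/bare-path dichotomy and an absorption step --- but the implementation diverges in ways that matter, and there is a genuine gap in the bare-path branch.

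For the leaf case, the paper does not use absorption at all. It refines Krivelevich's dichotomy to \emph{pendant stars} (by applying it to $T'$, the tree with leaves removed), then randomly partitions $V(G)$ into $V_1,V_2,V_3$, embeds the ``core'' $T_1$ into $G[V_1]$ using the Johannsen--Krivelevich--Samotij expander-embedding theorem (which formalizes the candidate-set maintenance you describe, and whose hypotheses are exactly what $\delta(G)\ge\varepsilon n$ plus $\alpha^*(G)<\alpha n$ supply inside a random part), and then finishes with a maximum-matching argument from the roots into $V_2$ plus a star-matching lemma into $V_3$. The ``missing $O(\alpha n)$'' deficiency you worry about is handled by the minimum degree into a small extra part $V_3$, not by an absorbing template. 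This is simpler than your reservoir scheme for this case.

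For the bare-path case, the paper works with caterpillars of a fixed length $k'$ and reduces the problem of embedding $n'$ vertex-disjoint caterpillar central paths with prescribed endpoints to finding a \emph{transversal $C_{k'-2}$-factor} in an auxiliary spanning subgraph of an $n'$-blow-up of a cycle; it then proves this tiling statement (Lemma~\ref{33}) by a lattice-based absorption combined with Montgomery's bipartite template. This reduction is the crucial structural move, because it converts the ``re-route a path to pass through a leftover vertex'' intuition into a closed, balanced tiling problem where absorbers and a near-perfect tiling can be constructed cleanly. Your proposed replacement --- re-route a reserved bare path through a stray vertex using short connecting paths --- does not close up as stated: a bare path of $T$ has fixed length, so routing it through a new vertex $v$ requires evicting one of its current internal vertices, which then becomes leftover itself, and nothing in your description controls this exchange or shows it terminates. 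Moreover, you need to do $\Theta(\alpha n)$ such re-routings simultaneously while the pool of ``currently unused'' vertices shrinks, and the short-connecting-path existence you invoke is not uniform over all those iterations. Making this precise is essentially the content of the paper's Section~4; your sketch correctly locates the difficulty (``the technical heart'') but does not resolve it.

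One smaller point: you appeal to a generic ``defect Hall'' step on an assignment graph for the finish. In the leaf case this is fine in spirit, but for a star (a root with up to $\Delta-1$ leaves) an ordinary matching does not suffice; the paper uses an $f$-matching/star-matching lemma (Lemma~\ref{11}) for exactly this reason. In the bare-path case, the ``assignment graph'' you describe joins vertices to \emph{internal} slots of paths, where adjacency depends on a whole chain of constraints, not just on one already-embedded neighbour; a bipartite Hall-type statement does not directly capture consistency along a path, which is another reason the paper reformulates this as a $C_k$-factor problem.

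In summary: the leaf branch of your plan is on the right track but heavier than necessary; the bare-path branch has a real gap in the absorption mechanism, which the paper fills by the transversal $C_k$-factor reduction and its own absorption lemma.
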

	
That is, for a graph $G$ with sublinear $\alpha^*(G)$, the minimum degree condition forcing bounded-degree spanning trees is almost sublinear. 
Another motivation of our result is its connection to the randomly perturbed graphs introduced by Bohman, Frieze and Martin~\cite{BFM}, where the host graph is obtained by adding random edges to a deterministic graph with minimum degree conditions.
Krivelevich, Kwan and Sudakov \cite{Krivelevich2017BOUNDED} showed that for any $\varepsilon>0$, $\Delta\in \mathbb{N}$ and $T\in \mathcal{T}(n,\Delta)$, if $G$ is an $n$-vertex graph with $\delta(G)\geq\varepsilon n$, then the randomly perturbed graph $G\cup G(n,\frac{C}{n})$ \textit{a.a.s.} contains $T$ as a subgraph, where $C$ depends only on $\varepsilon$ and $\Delta$.
They suggested that such a result can be improved to a universality result, that is, $G\cup G(n,\frac{C}{n})$ \textit{a.a.s.} contains all members of $\mathcal{T}(n,\Delta)$ simultaneously.
This is confirmed by B\"ottcher et al.~\cite{bottcher2019universality}.
Indeed, in their technical result which we state below, they replaced $G(n,\frac{C}{n})$ by a deterministic sparse expander graph, and thus get the universality part for free.

\begin{theorem}[\cite{bottcher2019universality}, Theorem 2]
\label{thm2}
For any $\alpha>0$ and integers $C\ge2$ and $\Delta>1$, there exist $\varepsilon>0$, $D_0$ and $n_0$ such that the following holds for any $D\ge D_0$ and $n\ge n_0$.
Let $G$ be an $n$-vertex graph satisfying the following two conditions:\\
1. $\Delta(G)\leq CD$,\\
2. $e(U, W)\ge \frac{D}{Cn}|U||W|$ for all sets $U, W\subseteq V(G)$ with $|U|,|W|\ge\varepsilon n$.\\
Suppose $G_\alpha$ is an $n$-vertex graph on the same vertex set and $\delta(G_\alpha)\ge\alpha n$.
Then $H:=G_\alpha\cup G$ is $\mathcal{T}(n,\Delta)$-universal.
\end{theorem}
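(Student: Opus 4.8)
The plan is to embed an arbitrary $T\in\mathcal T(n,\Delta)$ into $H=G_\alpha\cup G$ by the tree-embedding machinery for expanders (in the spirit of Friedman--Pippenger and of Montgomery's spanning-tree method), exploiting the two hypotheses in complementary ranges: $\delta(G_\alpha)\ge\alpha n$ governs the expansion of small and medium sets, while condition~2 on $G$ governs that of linear-sized sets. Introduce auxiliary constants with $1/D_0\ll\varepsilon\ll\mu\ll\eta\ll\min\{\alpha,1/\Delta\}$ and a large constant $\ell_0=\ell_0(\Delta)$. The only consequences of condition~2 we use are: if $|W|\ge\varepsilon n$ then all but fewer than $\varepsilon n$ vertices of $V(G)$ have a $G$-neighbour in $W$ (otherwise some $\varepsilon n$-set $U$ has $e_G(U,W)=0<\tfrac{D}{Cn}|U||W|$), together with the fact that, $D$ being large, $G$ is genuinely edge-dense between any two linear-sized sets.

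\emph{Step 1 (structure of $T$).} By the standard leaf-or-bare-path lemma, $T$ either has at least $\eta n$ leaves, or contains at least $\eta n$ vertex-disjoint bare paths of length $\ell_0$ (the internal vertices of a bare path having degree $2$ in $T$). In the first case fix a set $L$ of $\mu n$ leaves with pairwise distinct parents (possible as $\mu\ll\eta/\Delta$), let $P$ be the set of their parents, and set $T':=T-L$, a tree on $(1-\mu)n$ vertices with $\Delta(T')\le\Delta$. In the second case contract $\mu n/(\ell_0-1)$ of the bare paths to single ``marked'' edges, obtaining such a $T'$. In either case it suffices to embed $T'$ appropriately and then run a short completion.

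\emph{Step 2 (almost-spanning embedding).} Take a uniformly random $R\subseteq V(H)$ with $|R|=\mu n$; with high probability $|N_{G_\alpha}(v)\cap R|\ge\mu\alpha n/2$ for every $v\in V(H)$, and we fix such an $R$. Embed $T'$ into $V(H)\setminus R$ by an iterative scheme of Montgomery type: cut $T'$ into a bounded number of subtrees, grow the small subtrees and the inter-piece connections one vertex at a time, and absorb the bulk of each large piece in rounds. This runs because, first, $T'$ leaves at least $\mu n$ host vertices unused throughout, so $\delta(G_\alpha)\ge\alpha n$ lets us extend any frontier vertex---indeed every set $S$ with $|S|\le\alpha n/(10\Delta)$ has $|N_{G_\alpha}(S)|\ge\alpha n\ge(\Delta+1)|S|$, a Friedman--Pippenger-type expansion hypothesis that embeds the small subtrees and connectors outright; and, second, the large-set property of $G$ recorded above rules out the bottlenecks (frontier sets whose unused neighbours have all been occupied) that would otherwise block an embedding onto a full $(1-\mu)$-fraction of $V(H)$. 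We run the scheme so that, in the leaf case, the pair $(\phi(P),R)$ spans a quasirandom bipartite graph of density $\Omega(\alpha)$ (achievable since $P$ is scattered through $T'$ and the scheme has slack to place $\phi(P)$ roughly uniformly), and likewise keep the images of the marked-edge endpoints well spread.

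\emph{Step 3 (completion; main obstacle).} In the leaf case, the quasirandom bipartite graph between $\phi(P)$ and $R$---balanced on $\mu n+\mu n$ vertices---satisfies Hall's condition, so a perfect matching lets us attach the leaves of $L$ into $R$ and $T\subseteq H$ follows. In the bare-path case one must, for each marked edge $\phi(a_i)\phi(b_i)$, route an internally disjoint path of the prescribed length, the routes jointly spanning $R$; this is a path-system factor with prescribed endpoints in $H[R\cup\{\text{terminals}\}]$, built greedily from the minimum degree $\ge\mu\alpha n/2$ available inside $R$ and the flexibility of long paths. The delicate point is Step~2: the constant-factor ($\Delta$-type) expansion supplied by $G_\alpha$ reaches only up to sets of size $O(\alpha n/\Delta)$, which by Friedman--Pippenger handles trees on a small linear fraction of the vertices but not a $(1-\mu)$-spanning embedding; bridging this gap---and doing so while keeping the flexible images ($\phi(P)$, or the marked-edge endpoints) well spread against $R$---is precisely where the two hypotheses have to be combined carefully, and is the technical heart of the argument.
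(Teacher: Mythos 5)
There is a genuine gap: your Step 2 is not a proof but a declaration that a proof exists, and you say so yourself (``bridging this gap \dots is precisely \dots the technical heart of the argument''). Concretely, the minimum degree $\delta(G_\alpha)\ge\alpha n$ gives Friedman--Pippenger-type expansion only for sets of size $O(\alpha n/\Delta)$, and the single consequence you extract from condition~2 (every $\varepsilon n$-set is dominated by all but $\varepsilon n$ vertices) is far too weak to drive a Montgomery-style $(1-\mu)$-spanning embedding: you never establish any quantitative expansion for sets of intermediate size, never specify the cutting/absorbing rounds, and the claim that the scheme can be run ``with slack'' so that $(\phi(P),R)$ is quasirandom of density $\Omega(\alpha)$, or that the marked-edge endpoints stay ``well spread'', is asserted without an argument. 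The bare-path completion in Step 3 is a second missing piece of comparable difficulty: joining each prescribed pair $\phi(a_i)\phi(b_i)$ by internally disjoint paths of exactly prescribed lengths whose union covers $R$ exactly is a spanning path-system problem with fixed endpoints; a greedy argument from minimum degree $\mu\alpha n/2$ inside $R$ cannot handle the endgame when $R$ is nearly exhausted, and ``the flexibility of long paths'' is not a substitute for an absorption or matching argument.

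For comparison, the present paper obtains this statement as a corollary of the stronger Theorem~\ref{1} (take the parameter $\varepsilon$ of the statement to be the bipartite-hole parameter $\alpha$ of Theorem~\ref{1}; condition~2 forces $\alpha^*(H)<\varepsilon n$ and $\delta(H)\ge\alpha n$, and the maximum-degree hypothesis is never used), and the proof of Theorem~\ref{1} supplies exactly what your sketch leaves open. Large-set expansion is not coaxed out of a spanning-embedding scheme: after a random split, any $\alpha n$-subset $Z$ of $V_1$ has $|N_{V_1}(Z)|>|V_1|-2\alpha n$ by the no-hole property, so $G[V_1]$ is an $(n_1,d)$-expander and the tree minus its pendant stars (or caterpillar interiors) is embedded in one stroke by Theorem~\ref{9}; the leaves are attached by the $f$-star-matching Lemma~\ref{11} (a Hall-type statement whose hypotheses are verified from the degree and no-hole conditions, not from a quasirandomness claim); and the prescribed-endpoint path system is reduced to a transversal $C_k$-factor in a blow-up of $C_k$ (Lemma~\ref{33}), proved via regularity and a lattice-based absorbing structure. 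Until you either prove your Step~2 bottleneck-free almost-spanning embedding together with the spreadness guarantees, or replace Steps~2--3 by machinery of this kind, the proposal does not establish the theorem.
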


Theorem~\ref{1} slightly improves upon Theorem~\ref{thm2}, where we do not need to distinguish two graphs (or equivalently the  maximum degree condition on the sparse graph $G$ is no longer needed).


\section{Proof strategy and Preliminaries}
\subsection{Notation}

For a graph $G=(V,E)$, let $v(G)=\vert V\vert$ and $e(G)=\vert E\vert$. 
Given a collection of subgraphs $\mathcal{F}=\left\{ F_{i}: i\in I\right\}$, let $V(\mathcal{F})=\cup_{i\in I}V(F_{i})$.
Given two vertex-disjoint graphs $G_1, G_2$, let $G_1\cup G_2$ be the union of $G_1$ and $G_2$.
For $U \subseteq V(G)$, let $G[U]$ be the induced subgraph of $G$ on $U$ and let $G-U$ be the induced graph after removing $U$, that is $G-U:=G[V\backslash U]$.
Given a vertex $v\in V(G)$ and $X,Y\subseteq V(G)$, denoted by $N_{X}(v)$  the set of neighbors of $v$ in $X$ and let $d_{X}(v):=\vert N_{X}(v)\vert $. 
The neighborhood of $X$ in $G$ is denoted by  $N_{G}(X)=(\cup_{v\in X}N(v))\backslash X$ and let $N_{Y}(X)=N_{G}(X)\cap Y$.
We omit the index $G$ if the graph is clear from the context.

For a graph $F$ on $[k]:=\left\{ 1,\ldots,k\right\}$, we say that $B$ is the \textit{$n$-blow-up} of $F$ if there exists a partition $V_{1},\ldots,V_{k}$ of $V(B)$ such that $\vert V_{1}\vert=\cdots=\vert V_{k} \vert =n$ and we have that $\{u,v\}\in E(B)$ if and only if $u\in V_{i}$ and $v\in V_{j}$ for some $\{i,j\}\in E(F)$.
Given a spanning subgraph $G$ of $B$, we call the sequence $V_1,\dots, V_k$ the \textit{parts} of $G$ and we define $\bar{\delta}(G):=\min_{\{i,j\}\in E(F)}\delta(G[V_i,V_{j}])$ where $G[V_i,V_{j}]$ is the bipartite subgraph of $G$ induced by the parts $V_i$ and $V_{j}$. A subset $R$ of $V(G)$ is \textit{balanced} if $\vert R\cap V_1\vert=\cdots=\vert R\cap V_k\vert$. In particular,
we say a subset of $V(G)$ or a subgraph of $G$  \textit{transversal} if it intersects each part in exactly one vertex.

For a path $P$, the \textit{length} of $P$ is the number of edges in $P$.
Given two vertices $x,y$, an \textit{$x,y$-path} is a path with ends $x$ and $y$.
Let $T$ be a tree and $T^{\prime}$ be obtained from $T$ by removing all leaves. 
A \textit{pendant star} is a maximal star centered at a  leaf of $T^{\prime}$, where the unique neighbor of the center inside $T^{\prime}$ is called the \textit{root} of the pendant star. 
A \textit{bare path} in $T$ is a path whose internal vertices have degree exactly two in $T$.
A \textit{caterpillar} in $T$ consists of a bare path in $T^{\prime}$ as the \textit{central path} with some (possibly empty) leaves attached to the internal vertices of the central path, where \emph{branch} vertices are the internal vertices attached with at least one leaf. 
The \textit{length} and \textit{ends} of the caterpillar  refer to the length and ends of its central path.

For two graphs $H$ and $G$, an \textit{embedding} $\varphi$ of $H$ in $G$ is an injective map $\varphi: V(H) \rightarrow V(G)$ such that $\left\{ v,w \right\}\in E(H)$  implies $\left\{ \varphi(v),\varphi(w) \right\}
\in E(G)$.	
For all integers $a, b$ with $a\leq b$, let $[a,b]:= \left\{ i\in \mathbb{Z}:a\leq i \leq b \right\}$ and $[a]:=\left\{ 1, 2,\dots, a \right\}$.	When we write $\alpha \ll \beta \ll \gamma$, we always mean that $\alpha, \beta, \gamma$ are constants in $(0, 1)$, and $\beta \ll \gamma$ means that there exists $\beta_{0}=\beta_{0}(\gamma)$ such that the subsequent arguments hold for all $0<\beta \leq \beta_{0}$. Hierarchies of other lengths are defined analogously. For the sake of clarity of presentation, we will sometimes omit floor and ceiling signs when they are not crucial. 

\subsection{Graph Expansion and Trees}
We will introduce some graph expansion properties to embed the trees.
\begin{definition}[\cite{Johannsen2013Expanders}]\label{99}
	\rm{Let $n \in \mathbb{N}$ and $d>0$. A graph $G$ is an \textit{$(n,d)$-expander} if $\vert G \vert=n$ and $G$ satisfies the following two conditions.\\
		1. $\vert N_{G} (X)\vert \geq d\vert X \vert $ for all sets $X \subseteq V(G)$ with $1\leq\vert X \vert<\lceil \frac{n}{2d} \rceil$.\\
		2. $e_{G}(X,Y)>0$ for all disjoint  $X,Y \subseteq V(G)$ with $\vert X \vert=\vert Y \vert=\lceil \frac{n}{2d} \rceil$}.
	
\end{definition}

In \cite{2010Embedding}, Krivelevich considered trees differently according to whether they contain many leaves or many disjoint bare paths.
\begin{lemma}[\cite{2010Embedding}]\label{61}
	For any integers $n,k>2$, a tree on $n$ vertices either has at least $\frac{n}{4k}$ leaves or a collection of at least $\frac{n}{4k}$ vertex-disjoint bare paths of length $k$.		
\end{lemma}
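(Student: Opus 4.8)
I would prove Lemma~\ref{61} by a simple counting/extremal argument on the tree structure. Fix a tree $T$ on $n$ vertices and let $L$ denote its set of leaves, with $\ell=|L|$. If $\ell\ge \frac{n}{4k}$ we are done, so assume $\ell<\frac{n}{4k}$ and we must produce at least $\frac{n}{4k}$ vertex-disjoint bare paths of length $k$. The idea is that few leaves forces the tree to be ``path-like'' in the sense that most of its vertices lie on long induced segments of degree-$2$ vertices, and such segments can be chopped into many bare paths of length exactly $k$.

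To make this precise, I would first bound the number of vertices of degree at least $3$. In any tree, $\sum_{v}(d(v)-2)=-2$, so $\sum_{v:d(v)\ge3}(d(v)-2)=\ell-2<\frac{n}{4k}$; in particular the number $b$ of branch vertices (degree $\ge 3$) satisfies $b\le \ell-2<\frac{n}{4k}$. Now delete from $T$ all leaves and all branch vertices: the number of vertices removed is at most $\ell+b<\frac{n}{2k}$, so the remaining forest $F$ has more than $n-\frac{n}{2k}$ vertices, and every vertex of $F$ has degree at most $2$ in $T$ and hence $F$ is a disjoint union of paths, each of which is a bare path in $T$. Thus $V(T)$ decomposes into a collection of at most $\ell+b< \frac{n}{2k}$ ``special'' vertices together with a union of bare paths $Q_1,\dots,Q_m$ whose total length (number of vertices) exceeds $n-\frac{n}{2k}$.

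From each $Q_i$ with $|V(Q_i)|=q_i$ I can carve out $\lfloor (q_i+1)/(k+1)\rfloor\ge \lfloor q_i/(k+1)\rfloor$ internally-disjoint bare paths of length $k$ (a bare path of length $k$ uses $k+1$ vertices, but consecutive ones can share an endpoint; to keep them genuinely vertex-disjoint I would instead take $\lfloor q_i/(k+1)\rfloor$ of them, which is already enough). Summing, the number of vertex-disjoint bare paths of length $k$ produced is at least
\[
\sum_{i=1}^m \left\lfloor \frac{q_i}{k+1}\right\rfloor \;\ge\; \sum_{i=1}^m \frac{q_i - k}{k+1} \;\ge\; \frac{\big(\sum_i q_i\big) - mk}{k+1}.
\]
Since $m<\frac{n}{2k}$ and $\sum_i q_i> n-\frac{n}{2k}$, and using $k>2$ so that $k+1\le \frac32 k$, a direct estimate gives that this quantity is at least $\frac{n}{4k}$; I would carry out this arithmetic carefully, possibly absorbing the small error terms using $n$ large (or just tracking constants exactly, since the bound $\frac{n}{4k}$ is not tight).

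The main obstacle — really the only delicate point — is bookkeeping the loss from two sources simultaneously: the $\ell+b<\frac{n}{2k}$ vertices removed to expose the bare paths, and the up-to-$k$ vertices wasted per path when its length is not a multiple of $k+1$. One must check these combined losses still leave room for $\frac{n}{4k}$ paths; the factor $\frac14$ (rather than, say, $\frac12$) in the statement is exactly the slack that makes this go through cleanly for all $k>2$. An alternative framing that avoids case-chasing is to observe that if $T$ has fewer than $\frac{n}{4k}$ bare paths of length $k$ in a maximum such collection, then after deleting those paths plus all leaves and branch vertices the remainder is a union of short ($<k$ vertex) bare-path segments, each attached to the rest, and count vertices to force $\ell\ge\frac{n}{4k}$; either way the argument is short.
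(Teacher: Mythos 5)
The paper does not actually prove Lemma~\ref{61}: it is imported verbatim from Krivelevich~\cite{2010Embedding}, so there is no in-paper argument to compare against. Your proof is correct in outline and is essentially the standard argument from that source: use $\sum_v(d(v)-2)=-2$ to bound the number $b$ of vertices of degree at least $3$ by $\ell-2$, delete leaves and branch vertices to expose a disjoint union of bare paths, and chop those into vertex-disjoint segments of length $k$.

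Two points need tightening when you carry out the bookkeeping you deferred. First, the bound $m<\frac{n}{2k}$ on the number of path components $Q_1,\dots,Q_m$ is used but not justified; it follows because contracting each maximal bare path of $T$ yields a tree on the $\ell+b$ removed vertices, hence with $\ell+b-1$ edges, and each component of $F$ lives inside one such edge, so $m\le \ell+b-1<\frac{n}{2k}$. Second, the weakening $k+1\le\frac32 k$ you suggest is not quite strong enough: it gives a lower bound of $\frac{n(k-1)}{3k^2}$ on the path count, and $\frac{k-1}{3k}\ge\frac14$ only for $k\ge4$, so the case $k=3$ slips through. Keep the exact denominator instead: with $m<\frac{n}{2k}$ and $\sum_i q_i>n-\frac{n}{2k}$, the number of carved paths exceeds
\[
\frac{\bigl(n-\tfrac{n}{2k}\bigr)-\tfrac{n}{2k}\cdot k}{k+1}=\frac{n(k-1)}{2k(k+1)},
\]
and $\frac{k-1}{2(k+1)}\ge\frac14$ is equivalent to $k\ge3$, i.e. to the hypothesis $k>2$. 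With these two repairs the argument is complete and in fact matches the constant $\frac{1}{4k}$ exactly at $k=3$.
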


We will use the following corollary to divide $\mathcal{T}(n,\Delta)$ into trees with many  pendant stars and trees with many vertex-disjoint caterpillars. 
\begin{corollary}\label{2}
	For any integer $n,k>2$, a tree on $n$ vertices with maximum degree $\Delta$ either has at least $\frac{n}{4k\Delta}$ pendant stars or a collection of at least $\frac{n}{4k\Delta}$ vertex-disjoint caterpillars each of length $k$.
\end{corollary}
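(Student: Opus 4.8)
The plan is to apply Lemma~\ref{61} not to $T$ but to the reduced tree $T'$ obtained from $T$ by deleting all of its leaves, exploiting the fact that pendant stars and caterpillars of $T$ are, up to attaching leaves, nothing but the leaves and bare paths of $T'$. Concretely, a leaf of $T'$ together with its $T$-neighbourhood is exactly a pendant star of $T$, so $T$ has precisely as many pendant stars as $T'$ has leaves; and a bare path of length $k$ in $T'$, together with all leaves of $T$ hanging off its internal vertices, is exactly a caterpillar of length $k$ in $T$. Hence each of the two alternatives in Lemma~\ref{61} applied to $T'$ delivers one of the two structures we want, and it remains only to check that the counts are large enough.

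The one quantitative input is the bound $v(T')\ge n/\Delta$, which is also the only place the maximum degree is used. First I would note that, since $n>2$, no edge of $T$ joins two leaves, so every leaf of $T$ has its unique neighbour in $V(T')$. If $v(T')\ge 2$, then $T'$ is a connected tree on at least two vertices, so every vertex of $T'$ has a neighbour inside $T'$ and therefore at most $\Delta-1$ leaf-neighbours in $T$; summing over $V(T')$ shows that $T$ has at most $(\Delta-1)v(T')$ leaves, whence $n=v(T')+(\text{number of leaves of }T)\le \Delta\, v(T')$. The degenerate cases $v(T')\le 1$ (so $T$ is a star) and $v(T')=2$ (so $T$ is a double star, and $T'$ is a single edge with two leaves) will be disposed of directly: there $n\le 2\Delta$, so $\tfrac{n}{4k\Delta}<1$ and the corollary is immediate.

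Assuming now $v(T')\ge 3$, I would apply Lemma~\ref{61} to $T'$ with the given parameter $k>2$. If $T'$ has at least $\tfrac{v(T')}{4k}$ leaves, then $T$ has at least $\tfrac{v(T')}{4k}\ge \tfrac{n}{4k\Delta}$ pendant stars, and we are done. Otherwise $T'$ contains a collection of at least $\tfrac{v(T')}{4k}$ vertex-disjoint bare paths of length $k$; enlarging each such path by the leaves of $T$ attached to its internal vertices turns it into a caterpillar of $T$ of length exactly $k$, and these caterpillars remain pairwise vertex-disjoint, since the bare paths are disjoint and lie in $T'$ while each leaf of $T$ is attached to at most one of them. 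Thus $T$ contains at least $\tfrac{v(T')}{4k}\ge \tfrac{n}{4k\Delta}$ vertex-disjoint caterpillars of length $k$, as required. The only real idea here is to pass from $T$ to $T'$; I expect the only friction to come from pinning down the degenerate cases $v(T')\le 2$ and from the routine verification that the enlarged caterpillars are genuinely vertex-disjoint and of length exactly $k$.
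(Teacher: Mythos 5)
Your proof is correct and follows the same approach as the paper: you pass to the reduced tree $T'$, establish $|T'|\ge n/\Delta$ from the degree bound, and apply Lemma~\ref{61} to $T'$, translating leaves of $T'$ into pendant stars and bare paths in $T'$ into caterpillars. The paper's own proof is terser and does not explicitly address the degenerate cases $v(T')\le 2$ (needed so that Lemma~\ref{61} applies) or spell out the vertex-disjointness of the resulting caterpillars, but the key idea is identical.
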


Suppose $T$ is an $n$-vertex tree with maximum degree at most $\Delta$ and $T^{\prime}$ is the subtree obtained by removing all leaves. Then it holds that $\vert T^{\prime} \vert+\vert T^{\prime} \vert(\Delta-1)\geq n$.
We can apply Lemma \ref{61} on $T^{\prime}$ to obtain at least $\frac{|T^\prime|}{4k}\ge\frac{n}{4k\Delta}$ pendant stars or a collection of at least $\frac{n}{4k\Delta}$ vertex-disjoint caterpillars of length $k$ in $T$.

In \cite{2001Tree}, Haxell extended a result of Friedman and Pippenger \cite{1987Expanding} and showed that one can embed every almost spanning tree with bounded maximum degree in a graph with strong expansion property. 
We will use the following result by Johannsen, Krivelevich and Samotij \cite{Johannsen2013Expanders}.

\begin{theorem}[\cite{Johannsen2013Expanders}]
	Let $n,\Delta \in \mathbb{N}$, $d \in  \mathbb{R}^+$ with $d \geq 2\Delta$ and $G$
	be an $(n,d)$-expander. Given any $T \in \mathcal{T}(n-4\Delta \lceil \frac{n}{2d} \rceil, \Delta)$, we can find a copy of $T$ in  $G$.  \label{9}
\end{theorem}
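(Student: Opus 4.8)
The plan is to prove Theorem~\ref{9} by the vertex-by-vertex tree-embedding method of Friedman and Pippenger~\cite{1987Expanding}, in the almost-spanning refinement of Haxell~\cite{2001Tree}: one grows an embedded subtree of $T$ one vertex at a time while maintaining a quantitative expansion invariant certifying that the next vertex can always be placed. Throughout write $m:=\lceil n/(2d)\rceil$ and $n':=n-4\Delta m$, so the task is to embed an arbitrary $T\in\mathcal{T}(n',\Delta)$ into the $(n,d)$-expander $G$; we may assume $\Delta\ge 2$ since trees of maximum degree at most $1$ have at most two vertices.

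First I would extract two clean expansion facts from Definition~\ref{99}. Condition~1 gives $|N_G(X)|\ge d|X|\ge 2\Delta|X|\ge(\Delta+1)|X|$ for every $X$ with $|X|<m$. For $|X|\ge m$, Condition~2 is used in the usual way: $X$ contains a set of size $m$, so no set of size $m$ disjoint from $X$ avoids $N_G(X)$, whence $|V(G)\setminus(X\cup N_G(X))|<m$ and therefore $|N_G(X)|>n-2m$. Since $d\ge 2\Delta$ forces $m\le n/(4\Delta)+1$, for $n$ large one has $n-2m\ge(\Delta+1)\cdot 2m$, and the two cases combine to
\[
|N_G(X)|\ \ge\ (\Delta+1)\min(|X|,\,2m)\qquad\text{for every nonempty }X\subseteq V(G),
\]
so $G$ is a ``$(\Delta+1)$-expander up to size $2m$'' in the Friedman--Pippenger sense; separately, $|N_G(X)|>n-2m$ whenever $|X|\ge m$, which is the fact that will power the almost-spanning part.

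The engine is the extension lemma. One calls a partial embedding $\phi\colon V(F)\to V(G)$ of a subforest $F\subseteq T$ \emph{good} if $\Delta(F)\le\Delta$ and, for every $S\subseteq V(G)$ with $1\le|S|\le 2m$, the number of free vertices $|N_G(S)\setminus\phi(V(F))|$ is at least $(\Delta+1)|S|$ minus the total number of already-used child slots of embedded vertices lying in $S$. Granting the two claims that (i) the one-vertex embedding of a root is good, and (ii) if a good $\phi$ has $v(F)<n'$ then for every $u\in V(F)$ with $d_F(u)<\Delta$ one can choose a free neighbour of $\phi(u)$ as the image of a new child of $u$ while keeping $\phi$ good, one embeds $T$ by fixing a breadth-first order on $V(T)$, placing the root arbitrarily, and applying (ii) repeatedly to attach each next vertex to its already-embedded parent; after $n'$ steps all of $T$ is placed. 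The slack $4\Delta m$ in $n'$ is exactly the reservoir of free vertices that keeps the right-hand side of the invariant non-negative all the way to the last vertex; it also matches the Krivelevich-type dichotomy of Lemma~\ref{61}~\cite{2010Embedding}, which one may use to organise $T$ into a core handled by the delicate extension together with leaves and bare-path interiors slotted in afterwards.

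The genuine work — and the step I expect to be the main obstacle — is proving~(ii): that goodness survives the addition of one vertex. This is the classical Friedman--Pippenger capacity-redistribution argument: if no single free neighbour of $\phi(u)$ keeps $\phi$ good, one extracts a would-be violating set $S$ of size at most $2m$ and derives a contradiction from the strict expansion (the factor $\Delta+1$ rather than $\Delta$) together with the $\ge 4\Delta m$ vertices still unembedded. I expect essentially all of the effort to concentrate here. This is also precisely why the embeddable size is $n-4\Delta\lceil n/(2d)\rceil$ and not $n$: pure small-set expansion (Condition~1 alone) reaches only $\Theta(m)$-vertex trees, and it is the large-set domination of Condition~2 — a set of size $\ge m$ meets every $m$-set and so has more than $n-2m$ neighbours — that lets the embedded subtree grow almost all the way up, with the invariant degrading only at the scale $m=\lceil n/(2d)\rceil$ at which the neighbourhood growth of $G$ saturates.
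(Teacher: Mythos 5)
You should first note that the paper contains no proof of Theorem~\ref{9} to compare against: it is imported verbatim from \cite{Johannsen2013Expanders} and used as a black box. In that reference the statement is deduced from Haxell's tree-embedding theorem \cite{2001Tree} (the almost-spanning strengthening of Friedman--Pippenger \cite{1987Expanding}) by verifying its hypotheses for an $(n,d)$-expander, so your overall plan --- grow the tree vertex by vertex while maintaining an expansion invariant --- is the right family of argument. The problem is that, as written, your proposal does not prove the theorem: step (ii), the extension lemma asserting that ``goodness'' survives the addition of one vertex, is essentially the entire content of Haxell's theorem, and you neither prove it nor reduce the problem to a citable statement; you explicitly defer it as ``the main obstacle''. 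Everything you do carry out (random-free bookkeeping aside) is the easy part.

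There is also a substantive problem with the quantitative set-up that would block the deferred step even if you attempted it. Writing $m=\lceil n/(2d)\rceil$, your distilled bound $|N_G(X)|\ge(\Delta+1)\min(|X|,2m)$ is both insufficient and not quite true: condition 2 gives, for $|X|\ge m$, only $|N_G(X)|\ge n-|X|-(m-1)$ (not $>n-2m$ once $|X|>m$), and for $d=2\Delta$, $\Delta=2$ the inequality $n-3m\ge(\Delta+1)\cdot 2m$ fails outright; more importantly, a $(\Delta+1)$-fold expansion up to scale $2m$ is exactly the Friedman--Pippenger hypothesis and only yields trees on $O(m)$ vertices, never $n-4\Delta m$. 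What makes the almost-spanning embedding work is the Haxell-type surplus for large sets, which must be built into the invariant: for $m\le|X|\le 2m$ one needs $|N_G(X)|\ge \Delta|X|+v(T)$, and this does follow from condition 2 since $n-|X|-(m-1)\ge n-3m+1\ge \Delta|X|+(n-4\Delta m)$ for $\Delta\ge 2$, while condition 1 gives $|N_G(X)|\ge \Delta|X|+1$ for $|X|<m$. The clean completion of your argument is therefore to quote Haxell's theorem with parameter $t=m$ and verify these two inequalities (handling $\Delta=1$ trivially), which is precisely the proof in \cite{Johannsen2013Expanders}; if you insist on reproving the extension step yourself, the critical-set redistribution argument has to be written out in full with the $+v(T)$ term present in the invariant for sets of size in $(m,2m]$.
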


As shown by Johannsen, Krivelevich and Samotij \cite{Johannsen2013Expanders}, the following lemma is useful for attaching leaves onto certain vertices.

\begin{definition}[\cite{Johannsen2013Expanders}]\label{99}
	\rm{Given a bipartite graph $G=(A,B,E)$ with $\vert A\vert\leq\vert B\vert$ and a function $f: A\rightarrow \mathbb{N}$ with $\sum_{u\in A}f(u) = |B|$, an $f$-\emph{matching  from} $A$ \emph{into} $B$ is a collection of vertex-disjoint stars $\{S_u:u\in A\}$ in $G$ such that $S_u$ has $u$ as the center and exactly $f(u)$ leaves inside $B$}.		
\end{definition}

\begin{lemma}[\cite{Johannsen2013Expanders}]\label{11}
	Let $d,m\in \mathbb{N}$ and let $G$ be a graph. Suppose that two disjoint sets $U,W \subseteq V(G)$ satisfy the following three conditions:\\
	1. $\vert N_{G} (X) \cap W\vert \geq d\vert X \vert $ for all sets $X \subseteq U$ with $1\leq\vert X \vert\leq m$,\\
	2. $e_{G}(X,Y)>0$ for all $X \subseteq U$ and $Y\subseteq W$ with $\vert X \vert=\vert Y \vert\geq m$,\\
	3. $d_U(w) \geq m$ for all
	$w \in W$.\\
	Then for every $f: U\to\{1,\dots,d\}$ with $\sum_{u\in U}f(u)=\vert W\vert$, there exists an $f$-matching from $U$ into $W$.

\end{lemma}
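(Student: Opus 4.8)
The plan is to encode $f$-matchings as perfect matchings of an auxiliary balanced bipartite graph and then verify Hall's condition. First I would build a bipartite graph $H$ on parts $U'$ and $W$ as follows: for each $u\in U$ create $f(u)$ copies $u^{(1)},\dots,u^{(f(u))}$, let $U'$ be the set of all these copies, and join $u^{(i)}$ to $w\in W$ in $H$ precisely when $uw\in E(G)$. Since $\sum_{u\in U}f(u)=|W|$, the two parts satisfy $|U'|=|W|$, and any perfect matching of $H$ converts directly into the required family of stars $\{S_u:u\in U\}$, where the leaves of $S_u$ are the vertices of $W$ matched to the copies of $u$. So it suffices to produce a perfect matching in $H$; since $|U'|=|W|$, by Hall's theorem it is enough to check $|N_H(S')|\ge|S'|$ for every $S'\subseteq U'$.

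The second step is to translate this condition back to $G$. Given $S'\subseteq U'$, set $X=\{u\in U:\text{some copy of }u\text{ belongs to }S'\}$; then $N_H(S')=N_G(X)\cap W$ while $|S'|\le\sum_{u\in X}f(u)$, so everything reduces to showing
\[
|N_G(X)\cap W|\ \ge\ \sum_{u\in X}f(u)\qquad\text{for every }X\subseteq U .
\]
For $1\le|X|\le m$ this is immediate from condition~1 together with $f(u)\le d$, namely $|N_G(X)\cap W|\ge d|X|\ge\sum_{u\in X}f(u)$; the empty set is trivial.

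The case $|X|>m$ is the part I expect to require the most care, and it is where conditions~2 and~3 are used. Let $Z=W\setminus N_G(X)$ be the set of vertices of $W$ having no neighbour in $X$. First, condition~2 should force $|Z|<m$: otherwise picking $X'\subseteq X$ and $Y'\subseteq Z$ with $|X'|=|Y'|=\min(|X|,|Z|)\ge m$ would give $e_G(X',Y')=0$, a contradiction. Second, if $Z\ne\emptyset$, any $w\in Z$ has all of its (at least $m$, by condition~3) $U$-neighbours inside $U\setminus X$, whence $|U\setminus X|\ge m$ and hence $\sum_{u\in U\setminus X}f(u)\ge|U\setminus X|\ge m>|Z|$; when $Z=\emptyset$ this bound is trivial. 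Putting these together yields $\sum_{u\in X}f(u)=|W|-\sum_{u\in U\setminus X}f(u)\le|W|-|Z|=|N_G(X)\cap W|$, which is exactly the desired inequality. With Hall's condition verified for all $S'$, $H$ has a perfect matching and the $f$-matching follows.
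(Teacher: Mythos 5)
Your proof is correct. The paper cites this lemma from Johannsen, Krivelevich and Samotij rather than proving it, so there is no internal proof to compare against, but your argument is a clean and standard one: replace each $u\in U$ with $f(u)$ clones, observe that $f$-matchings correspond exactly to perfect matchings in the resulting balanced bipartite graph, and verify Hall's condition. The two-case analysis is handled correctly. For small $X$ (size at most $m$), condition 1 together with $f\le d$ gives the bound directly. For large $X$, the key observation—that condition 2 forces $|W\setminus N_G(X)|<m$, while condition 3 forces $|U\setminus X|\ge m$ whenever some $w\in W$ misses $X$, so that $\sum_{u\in U\setminus X}f(u)\ge |U\setminus X|\ge m>|W\setminus N_G(X)|$ and hence $\sum_{u\in X}f(u)\le |N_G(X)\cap W|$—is exactly the right way to exploit all three hypotheses. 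This is essentially the argument one would expect from the cited source.
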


\subsection{Proof Overview}

We give a brief outline of our proof here.  Similar in previous works~\cite{2010Embedding,Krivelevich2017BOUNDED,montgomery2019spanning}, we classify the trees and deal with them separately. 
Our classification is given in Corollary~\ref{2}, which refines Lemma~\ref{61}.
First, for the pendant star case, let $T_1$ be a subtree obtained  by deleting the centers and  leaves of pendant stars in $T$ (but not the roots).  
We randomly partition $V(G)$ into $V_{1}, V_{2}$ and $V_{3}$ and 
embed 
$T_1$ into $G[V_{1}]$ by Theorem \ref{9}.
Then we greedily embed most of the centers into $V_{2}$.
The $\alpha^*$ property guarantees that we are left with a small portion of centers and we shall embed them by the degree condition into $V_{3}$. 
Finally we use Lemma~\ref{11} to find a desired star-matching and complete the embedding.

For the caterpillar case, we shall embed a suitable subset of branch vertices into a random set with ``good" expansion properties.
In this way, we can greedily finish the last step of embedding leaves of caterpillars by a star-matching.
To embed this suitable branch vertex set, we control the length of the caterpillars and
build an auxiliary graph $G=(V_{1},\ldots,V_{k},E)$ which is a spanning subgraph of the $n$-blow-up of $C_k$. Then 
we transform the problem of embedding vertex-disjoint paths  into finding a transversal cycle-factor in $G$, as Lemma \ref{33} below.
Let $\alpha^*_{\rm b}(G)$ be the largest integer $s$ such that $G=(V_{1},\ldots,V_{k},E)$ contains an $(s,s)$-bipartite-hole $(S,T)$ where $S\subseteq V_i, T\subseteq V_{i+1}$ for some $i\in[k]$.

\begin{lemma}\label{33}
	Given a positive integer $k \in 4\mathbb{N}$ and a constant $\delta$ with $\delta>\frac{2}{k}$, there exists $\alpha>0$ such that the following holds for sufficiently large $n \in \mathbb{N} $. 
	Let $G=(V_{1},\ldots,V_{k},E)$ be  a spanning subgraph of the n-blow-up of $C_{k}$ with $\bar{\delta}(G)\geq \delta n$ and $\alpha^*_{\rm b}(G)<\alpha n$. Then $G$ has a transversal $C_{k}$-factor.
\end{lemma}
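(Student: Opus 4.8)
The plan is to reduce the cycle length down to $4$ and then run the absorption method on that base case. First observe that the two hypotheses already guarantee plenty of perfect matchings: if $U_i\subseteq V_i$, $U_{i+1}\subseteq V_{i+1}$ are balanced with $|U_i|=|U_{i+1}|\ge(1-\delta+\alpha)n$, then $G[U_i,U_{i+1}]$ has a perfect matching, since a short argument from $\bar\delta(G)\ge\delta n$ together with $\alpha^*_{\rm b}(G)<\alpha n$ rules out any Hall‑violator (a minimal one would force a bipartite hole both of whose sides have size at least $\alpha n$); in particular every pair $G[V_i,V_{i+1}]$ has a perfect matching. Now invoke $k\in 4\mathbb N$ to split $V_1,\dots,V_k$ cyclically into four consecutive blocks, each a run of $k/4$ parts. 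Chaining $k/4-1$ perfect matchings between the consecutive pairs inside a block partitions its vertex set into $n$ vertex‑disjoint transversal paths; contracting each such path to a single vertex turns $G$ into a spanning subgraph $G'$ of the $n$‑blow‑up of $C_4$ on parts $A,B,C,D$ (the four path‑families), where $\pi$ is adjacent to $\pi'$ in the next part precisely when the last vertex of $\pi$ is adjacent in $G$ to the first vertex of $\pi'$. Since these endpoints run exactly over the boundary parts of the blocks, one checks $\bar\delta(G')\ge\delta n$ and $\alpha^*_{\rm b}(G')<\alpha n$, and a transversal $C_4$‑factor of $G'$ lifts to a transversal $C_k$‑factor of $G$. (If convenient one can go one step further: contracting a perfect matching of a pair of $G'$ turns the transversal $C_4$‑factor problem into a transversal triangle‑factor problem in a tripartite graph with the same parameters.)

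For the base case I would fix a small collection $\mathcal A$ of vertex‑disjoint \emph{absorbing gadgets}, of total size $o(n)$ and balanced, with the property that $G'[\,V(\mathcal A)\cup R\,]$ has a transversal $C_4$‑factor for every balanced $R$ disjoint from $V(\mathcal A)$ with $|R\cap A|\le\gamma n$. The gadget absorbing a transversal quadruple $Q=(q_A,q_B,q_C,q_D)$ is a transversal $C_4$, say on $(a,b,c,d)$, into which $Q$ can be woven: one wants the cross‑adjacencies $q_A\sim b,d$ and $q_C\sim b,d$ (and $a,c\sim q_B,q_D$), which decompose $\{a,b,c,d\}\cup Q$ into the two transversal $4$‑cycles $q_A\,b\,q_C\,d$ and $a\,q_B\,c\,q_D$. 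The essential point is supersaturation — every $Q$ admits $\Omega(n)$ pairwise disjoint such gadgets — and this is exactly where $\alpha^*_{\rm b}(G')<\alpha n$ does the work: each required adjacency asks for a vertex in the common neighbourhood of two vertices whose neighbourhoods have size $\ge\delta n\ge\alpha n$, so the bipartite‑hole condition repeatedly forces these neighbourhoods and their successive intersections to be large. (Because $\delta$ is only slightly above $2/k$, one may need to realise each gadget as a longer chain of vertices so that every adjacency constraint lies between \emph{consecutive} parts, or first pass to a suitably cleaned‑up sub‑structure; then a random–greedy selection assembles $\mathcal A$.)

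It remains to cover $G'-V(\mathcal A)$ by vertex‑disjoint transversal $C_4$'s up to a balanced leftover of at most $\gamma n$ vertices per part, and then let $\mathcal A$ absorb that leftover. I expect this almost‑factor step to be the main obstacle, and it is also the step that genuinely consumes the bound $\delta>2/k$ (which after the reduction is just a uniform lower bound on the relative minimum degree of $G'$). A naive greedy tiling — repeatedly pulling out one transversal $C_4$ via the expansion ``a set of size $\ge\alpha n$ inside a pair has at most $\alpha n$ non‑neighbours'' — runs out of minimum degree after only $\Theta(\delta n)$ rounds, far short of an almost‑factor; so the almost‑factor must be produced globally, for instance by regularising the bipartite pairs and observing that the degree hypothesis together with the hole condition (which forbids large ``independent'' cluster‑pairs and so keeps the reduced $4$‑partite graph essentially connected) forces a fractional transversal $C_4$‑factor of the reduced graph, which can then be rounded inside the clusters. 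Dovetailing the almost‑factor with the absorbing structure — keeping the leftover simultaneously small, balanced, and disjoint from $V(\mathcal A)$ — is the remaining delicate point. Finally, both hypotheses are genuinely needed: a blow‑up of two disjoint cliques has large minimum degree yet no spanning connected subgraph, while small bipartite holes by themselves do not even force a perfect matching.
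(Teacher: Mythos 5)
Your reduction to the $n$\nobreakdash-blow\nobreakdash-up of $C_4$ is mechanically sound as far as it goes: chaining perfect matchings inside each block (which the Hall/K\H{o}nig argument you sketch does produce, given the hole condition) yields $n$ transversal paths per block, the contracted graph $G'$ is a spanning subgraph of the $n$\nobreakdash-blow\nobreakdash-up of $C_4$, and both $\bar\delta(G')\ge\delta n$ and $\alpha^*_{\rm b}(G')<\alpha n$ do transfer because adjacency in $G'$ is determined entirely by boundary vertices. The fatal flaw is what this actually buys you. The hypothesis of the lemma is $\delta>2/k$, which for large $k$ is a very small number, and the reduction carries that same small $\delta$ over to $G'$. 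But a transversal $C_4$\nobreakdash-factor in a $4$\nobreakdash-partite graph genuinely needs $\bar\delta\ge (1/4-o(1))n$: the space\nobreakdash-barrier construction the paper describes (shrink each part to a subset of size $n/4-1$ carrying all edges, kill holes with an Erd\H{o}s graph on the rest) gives $\bar\delta\approx n/4$ and $\alpha^*_{\rm b}=o(n)$ yet no transversal $C_4$\nobreakdash-factor. As soon as $k>8$, your $\delta$ can sit strictly below $1/4$, and then there is no theorem — and no plausible theorem — that would give a transversal $C_4$\nobreakdash-factor in $G'$. So the base case you are reducing to is not merely unproven; in the parameter regime of the lemma it is false in general. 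The paper never incurs this loss because it works at length $k$ throughout: in the almost\nobreakdash-tiling lemma the crucial count is
\[
\sum_{i=1}^{k/4}\vert M_{2i-1}\vert \;=\; \tfrac{k}{4}\cdot\min\{N_0,\,2\bar\delta(R_d)\}\;\ge\; N_0,
\]
where the factor $k/4$ — the number of disjoint ``matching pairs'' available along the long cycle — is exactly what compensates for $\bar\delta(R_d)\approx (2/k)N_0$ being tiny. Collapsing the cycle to length $4$ sets that factor to $1$ and the arithmetic no longer closes.

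A secondary concern, which you flag yourself but do not resolve: your one\nobreakdash-$C_4$ absorbing gadget requires a single vertex $b$ lying in $N_B(a)\cap N_B(c)\cap N_B(q_A)\cap N_B(q_C)$, the intersection of four neighbourhoods each of size only about $\delta n$. The hole condition $\alpha^*_{\rm b}<\alpha n$ forbids two disjoint linear\nobreakdash-size sets with no edge between them, but it says nothing about pairwise intersections of neighbourhoods; for $\delta<3/4$ those four sets can be pairwise disjoint, so such a $b$ need not exist. The paper sidesteps this by building absorbers out of reachability chains of transversal $C_k$'s (using Proposition~\ref{90} to stitch long paths between two prescribed neighbourhoods, never asking for a common neighbour of more than two vertices at a time) rather than a single dense gadget. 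Both issues stem from the same source: the short cycle simply does not give you room to trade length for degree.
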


Although the minimum degree bound in Lemma~\ref{33} is not best possible and it only works for $k \in 4\mathbb{N}$, but it is enough for our purpose (indeed, we only need it work for large $k$).
We suspect that the (asymptotic) tight condition should be $(1+o(1))\frac nk$, given by the so-called space barrier.
Indeed, let $G$ be a (complete) $n$-blow-up of $C_k$ with parts labelled as $V_{1},\ldots,V_{k}$.
One can specify a set $U_i$ of size $n/k-1$ in each cluster $V_i$, $i\in [k]$ and remove all edges not touching $U:=\bigcup_{i\in [k]} U_i$ from $G$.
Then we add a $k$-partite Erd\H{o}s graph (obtained from random graph) to $V(G)\setminus U$ so that the resulting graph $G'$ satisfy $\alpha^*_{\rm b}(G')=o( n)$ but $G^\prime-U$ is $C_k$-free.
Now every transversal copy of $C_k$ in $G'$ must contain a vertex in $U$ and $\bar{\delta}(G')\geq n/k-1$.
Since $|U|<n$, $G'$ does not have a transversal $C_{k}$-factor.

If we remove the $\alpha^*_{\rm b}$ condition in Lemma~\ref{33}, then the minimum degree threshold for transversal $C_{k}$-factor is asymptotically $\bar{\delta}(G)\geq (1+\frac1k)\frac n2+o(n)$, as determined recently by Ergemlidze and Molla in~\cite{ergemlidze2022transversal}.

Our proof of Lemma~\ref{33} is based on the absorption method, which will be given in Section \ref{77}.
Finally, there are some other minimum-degree-type results in blown-up graphs~\cite{ergemlidze2022transversal,johansson2000triangle} and in multi-partite graphs~\cite{1999Variants, KeevashMycroft, LoMarkstrom, Magyar2002TripartiteVO, MARTIN20084337}, but not with any randomness condition.

\section{Proof of the main theorem}

\begin{proof}[ Proof of Theorem \ref{1}]
	Given a positive integer $\Delta$ and a constant $\varepsilon>0$, we set $k=\lceil \frac{48}{\varepsilon} \rceil $, $\gamma=\frac{1}{4k\Delta^2}$ and $\eta=\frac{1}{4k\Delta}$. 
	Choose 
	\begin{equation}
		\frac{1}{n}\ll\alpha\ll\frac{1}{d}\ll\varepsilon,\frac{1}{\Delta} \nonumber
	\end{equation}
	and let  $G$ be an $n$-vertex graph with $\delta(G)\ge\varepsilon n$ and $\alpha^*(G)<\alpha n$. 
	Given any tree $T\in \mathcal{T}(n,\Delta)$,
	by Corollary \ref{2},  we proceed the proof by considering the following two cases.
	
	\noindent\textbf{Case 1.}
	$T$ has at least $\eta n$ pendant stars.
	
	We can easily pick a collection  of $\gamma n$ vertex-disjoint pendant stars in $T$ and label it as  $\mathcal{D}=\{D_1,\dots,D_{\gamma n}\}$  for convenience. Write $A:=\left\{ a_{1},\dots,a_{\gamma n} \right\}$ and $B:=\left\{ b_{1},\dots,b_{\gamma n} \right\}$  where $a_{i}$ and $b_{i}$ are the root and center of  $D_i$ respectively.
	Let $\mathcal{S}=\left\{ S_{1},\dots,S_{\gamma n} \right\}$ where each $S_{i}$ is obtained from $D_i$ by removing the root.	Let $T_{1}$ be a subtree of $T$ obtained by deleting vertices of $\mathcal{S}$.
	Note that $\vert T_{1}\vert \leq n-2\gamma n$. Moreover
	we claim that $V(G)$ can be  partitioned into $V_{1},V_{2},V_{3}$ of sizes $n_{1},n_{2},n_{3}$ respectively, such that
	\begin{equation}\label{100}
		d_{V_{i}}(v)\geq\frac{\varepsilon}{2}\vert V_{i} \vert{\rm\ for\ all}\ v \in V(G),\ i\in [3]
	\end{equation}
	where 
	\begin{equation}\label{98}
		n_{1}= \frac{d\vert T_{1}\vert+4\Delta d}{d-2\Delta},\quad
		n_{2}=\gamma n,\quad
		n_{3}=n-n_{1}-n_{2} .
	\end{equation}
	Choose a partition $\left\{ V_{1},V_{2},V_{3} \right\}$ of $V(G)$ uniformly at random where  $\vert V_{i} \vert=n_{i}$.
	For every $v\in V(G)$, let  $f^{i}_{v}=d_{V_i}(v)$ and note that $\mu_{v}^{i}:=\mathbb{E}[f_{v}^{i}]\geq\varepsilon n_{i}$. 
	Let $q$ be the probability that there exist $v\in V(G)$ and $i\in [3]$ which violates property $(\ref{100})$.
	Then by  the union bound and Chernoff’s inequality
	(see e.g.  \cite{janson2011random}, Theorem 2.1), 
	
	\begin{equation}
		q\leq 3n\exp\left(\frac{-(\mu_{v}^{i}/2)^2}{2\mu_{v}^{i}} \right)
		\leq 3n \exp\left(-\frac{\varepsilon n_{i}}{8} \right)=o(1) \nonumber
	\end{equation}
	for sufficiently large $n$. Therefore, with positive probability the randomly chosen partition $\left\{ V_{1},V_{2},V_{3} \right\}$ satisfies property $(\ref{100})$. Then we have
	\begin{align}\label{n3}
		n_{3}&\geq n-\gamma n-\frac{d(n-2\gamma n)+4\Delta d}{d-2\Delta}=\frac{(d+2\Delta)\gamma n-2\Delta n-4\Delta d}{d-2\Delta} \notag\\
		&\geq\frac{d\gamma n-2\Delta n}{2d}\geq\frac{\gamma n}{4}
		\geq \frac{6\Delta\alpha n}{\varepsilon},
	\end{align}
	where in the  penultimate inequality we use $d\gamma n-2\Delta n\geq\frac{d\gamma n}{2}$ bacause $\frac{1}{d}\ll\varepsilon,\frac{1}{\Delta}$ and
	the last inequality follows since $\alpha\ll\varepsilon,\frac{1}{\Delta}$.

	\begin{claim} \label{12}
		\rm{$G[V_{1}]$ is an $(n_1,d)$-expander}.
	\end{claim}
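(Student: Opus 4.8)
The claim asserts that $G[V_1]$ is an $(n_1,d)$-expander, so I need to verify the two conditions in Definition~\ref{99}: the vertex-expansion $|N_{G[V_1]}(X)|\ge d|X|$ for all small $X$, and the positive-edge condition for disjoint sets of size $\lceil n_1/(2d)\rceil$.

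The plan is to derive both conditions from the two hypotheses we already have on $G[V_1]$: the minimum degree bound $\delta(G[V_1])\ge\tfrac{\varepsilon}{2}n_1$ from property~\eqref{100}, and the inherited bipartite independence number $\alpha^*(G[V_1])\le\alpha^*(G)<\alpha n$.

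First I would handle the edge condition (condition 2). Since $\lceil n_1/(2d)\rceil \ge \lceil n/(2d)\rceil$ up to the obvious size comparisons — actually $n_1<n$, so I should check $\lceil n_1/(2d)\rceil$ against $\alpha n$: we have $n_1/(2d) \gg \alpha n$ because $\alpha\ll 1/d$, so any two disjoint sets of size $\lceil n_1/(2d)\rceil$ inside $V_1$ have size exceeding $\alpha n > \alpha^*(G)$, hence they cannot form a bipartite hole in $G$, so there is an edge between them; that edge lies in $G[V_1]$. This is immediate.

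For condition 1, the vertex-expansion, I would split into two ranges of $|X|$. For $|X|$ small — say $|X| < \alpha n$ — I would use the minimum degree: pick any vertex $x\in X$; it has at least $\tfrac{\varepsilon}{2}n_1$ neighbours in $V_1$. But that alone only gives expansion to a set of size roughly $\tfrac{\varepsilon}{2}n_1$, independent of $|X|$, which is much larger than $d|X|$ precisely when $d|X| \le \tfrac{\varepsilon}{2}n_1 - |X|$, i.e. roughly $|X| \le \tfrac{\varepsilon n_1}{2d}$. Since $d < \lceil n_1/(2d)\rceil$ forces the relevant range to be $|X| < \lceil n_1/(2d)\rceil$, and $n_1/(2d)$ versus $\varepsilon n_1/(2d)$ differ only by the factor $\varepsilon$, I must be more careful: for $|X|$ in the range $[\varepsilon n_1/(2d),\, n_1/(2d))$ the minimum-degree argument does not directly suffice, so there I would invoke the bipartite-hole condition. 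Concretely, suppose $|N_{G[V_1]}(X)| < d|X|$. Since $|X| < n_1/(2d)$, we have $d|X| < n_1/2$, so the complement $W := V_1 \setminus (X \cup N_{G[V_1]}(X))$ has size $> n_1 - |X| - d|X| \ge n_1/2 - |X| > n_1/2 - n_1/(2d)$, which is at least $\alpha n$ (again using $\alpha \ll 1/d \ll \varepsilon$). Now $X$ and $W$ are disjoint, there are no edges between them in $G[V_1]$ hence none in $G$, but to get a bipartite hole I need $|X| \ge \alpha n$ as well. If $|X| \ge \alpha n$, take a subset $X'\subseteq X$ and $W'\subseteq W$ each of size $\lceil\alpha n\rceil > \alpha^*(G)$ — contradiction. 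If $|X| < \alpha n$, fall back to the minimum-degree argument: then $d|X| < d\alpha n \ll \varepsilon n_1/2 \le \tfrac{\varepsilon}{2}n_1 \le \delta(G[V_1]) \le |N_{G[V_1]}(X)|$, so condition 1 holds trivially. These two cases together cover all of $1\le |X| < \lceil n_1/(2d)\rceil$.

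The main obstacle is the book-keeping in the threshold range and making sure the constant hierarchy $\tfrac1n \ll \alpha \ll \tfrac1d \ll \varepsilon, \tfrac1\Delta$ is strong enough at each step — in particular that $n_1$ is genuinely of order $n$ (which follows from \eqref{98} and $d\gg\Delta$, so $n_1 \ge |T_1| \ge n(1-2\gamma)/\text{(something close to }1) \gtrsim n/2$), that $d\alpha n \ll \varepsilon n_1$, and that $n_1/2 - n_1/(2d) \ge \alpha n$. None of this is deep; it is a routine verification that the two structural hypotheses on $G[V_1]$ translate into the expander definition, with the bipartite-hole condition doing the work in the regime where $|X|$ is moderately large and the minimum-degree condition doing the work when $|X|$ is tiny.
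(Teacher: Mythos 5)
Your proposal is correct and follows essentially the same route as the paper: the edge condition falls out of $\alpha^*(G)<\alpha n\ll n_1/(2d)$, and the vertex-expansion condition is obtained by splitting on $|X|$, with the minimum-degree bound handling small $X$ and the bipartite-hole condition handling larger $X$ (the paper splits at $|X|\approx\varepsilon n_1/(2d+2)$ and argues directly via a subset $Z\subseteq X$ of size $\alpha n$, whereas you split at $|X|=\alpha n$ and argue by contradiction, but these are the same underlying idea). One small imprecision: $|N_{G[V_1]}(X)|\ge\delta(G[V_1])$ is not literally true since $N_{G[V_1]}(X)$ excludes $X$ by definition; you want $|N_{G[V_1]}(X)|\ge\frac{\varepsilon}{2}n_1-|X|$, which still dominates $d|X|$ because $|X|<\alpha n\ll\varepsilon n_1/(2d)$, so the conclusion is unaffected.
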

	\begin{proof}
		Let $m_{1}=\lceil \frac{\vert V_{1}\vert}{2d}\rceil$ and $m_{2}=\lceil \frac{\varepsilon \vert V_{1}\vert}{2d+2} \rceil$.	
		Since $\alpha^*(G)<\alpha n \leq m_{1}$, there is at least one edge between any two vertex-disjoint sets of size $m_1$ in $G[V_{1}]$.
		For $X \subseteq V_{1}$ with $1\le\vert X\vert\le m_{2}$, by (\ref{100}), we have 
		\begin{equation}\label{80}
			\vert N_{V_{1}}(X)\vert \ge\frac{\varepsilon}{2} \vert V_{1}\vert-\vert X \vert\ge d\vert X\vert.
		\end{equation}
		For $X\subseteq V_{1}$ with $m_{2}<\vert X\vert\ <m_{1} $, since $\alpha \ll \frac{1}{d}$, $ \varepsilon$, we can arbitrarily pick $Z\subseteq X$ with $\vert Z\vert=\alpha n$. As there is no edge between $Z$ and $V_{1}\backslash (Z\cup N_{V_{1}}(Z))$, we have $\vert V_{1}\backslash (Z\cup N_{V_{1}}(Z))\vert<\alpha n$, then $\vert N_{V_{1}}(Z)\vert>\vert V_{1}\vert -2\alpha n$. Thus 
		\begin{equation}\label{81}
			\vert N_{V_{1}}(X)\vert\ge\vert N_{V_{1}}(Z)\vert-(\vert X\vert-\vert Z\vert)
			\geq\vert V_{1}\vert-\vert X\vert-\alpha n
			\ge  d\vert X\vert.
		\end{equation}
		Together with $(\ref{80})$, $G[V_{1}]$ is an $(n_1,d)$-expander.
	\end{proof}
	
	Note that
	$\vert T_{1}\vert
	= n_{1}-4\Delta(\frac{n_{1}}{2d}+1)
	\leq n_{1}-4\Delta \lceil \frac{n_{1}}{2d} \rceil$, 
	where the first equality follows since $n_{1}= \frac{d\vert T_{1}\vert+4\Delta d}{d-2\Delta}$.
	Then by Theorem \ref{9}, there exists an embedding $f_{1}:V(T_1)\to V_1$.
	Let $L_{0}=V_{1}\backslash f_{1}(T_{1})$ and $L_{1}= f_{1}(A)$.
	Next, we will embed  the centers of pendant stars
	into $V_{2} \cup V_{3}$ and we do it in two steps.
	
	In the first step, we embed most of the vertices of $B$  into $V_{2}$. Consider the  bipartite graph $H$ on vertex sets $L_{1}$ and $V_{2}$, where
	$\vert L_{1} \vert=\vert V_{2}\vert=\gamma n$.
	Let $M$ be a maximum matching in $H$ and we claim that $\vert E(M)\vert\geq\gamma n-\alpha n$. Otherwise, since $\alpha^*(G)< \alpha n$,
	there is at least  one edge in $H-V(M)$, contrary to the maximality of $M$.
	Let $L_2=V(M)\cap V_2$ and $L_{3}=V_{2}\setminus L_{2}$.
	Without loss of generality, suppose we have embedded $B_{1}=\left\{ b_{1},\dots,b_{t}\right\}$ into $L_2$, where $t\geq\gamma n-\alpha n$.

	In the second step, we shall embed the  vertices of $B\setminus B_{1}$ into $V_{3}$. For every $u\in V(G)$, by (\ref{100}) and (\ref{n3}), we have $d_{V_{3}}(u)\geq\frac{\varepsilon}{2}\vert V_{3} \vert\geq\Delta \alpha n$.
	 Hence we can greedily embed $S_{t+1},\dots,S_{ \gamma n}$ into $G[V_{3}]$.
		Let $\mathcal{S}_1=\{S_1,\dots,S_t\}$.
	It follows that there exists an embedding $f_{2}: V(B_{1})\cup V(\mathcal{S}\setminus \mathcal{S}_{1})\rightarrow V_{2}\cup V_{3}$ such that $f_{2}(B_{1})=L_{2}$ and $f_{2}(\mathcal{S}\setminus \mathcal{S}_{1})\subseteq V_{3}$. 
	Let $L_{4}=V_{3}\setminus f_{2}(\mathcal{S}\setminus \mathcal{S}_{1})$
	and we have $\vert L_{4} \vert\geq\vert V_{3}\vert-\Delta \alpha n$.
	
	Now it remains to embed the leaves attached to the vertices of $ B_{1}$. 
	Consider the bipartite graph $Q$ on vertex sets $L_{2}$ and $L$,
	where $L=L_{0}\cup L_{3}\cup L_{4}$. Let  $m:=2\alpha n$ and $d:=\Delta-1$. Since $\alpha^*(G)<\alpha n< m$, there is at least one edge between any two disjoint vertex set of size $m$ in $Q$.
	For all $X\subseteq L_{2}$ with $1\leq \vert X\vert\leq m$, by (\ref{100}) and (\ref{n3}), we have $\vert N_{Q}(X)\cap L\vert \geq \vert N_{Q}(X)\cap L_{4}\vert \geq \frac{\varepsilon}{2}\vert V_{3} \vert-\Delta\alpha n\geq d\vert X \vert $.
	Moreover for each $u \in L$,
	we have $d_{ L_{2}}(u)\geq\frac{\varepsilon}{2}\vert V_{2} \vert-\alpha n\geq m$ due to  $\alpha\ll\varepsilon,\frac{1}{\Delta}$. Therefore by applying Lemma \ref{11} on $Q$, we obtain 
	 an embedding $f_{3}$ of $\mathcal{S}_{1}$  in $Q$ such that $f_{3}(u)=f_{2}(u)$ for every $u\in B_{1}$.
	In conclusion, it is clear that the map $f: V(T)\rightarrow V(G)$ defined by
	\begin{equation}
		f(u):=\left\{
		\begin{array}{cl}
			f_{1}(u)& {\rm if}\; u\in V(T_{1}) \\
			f_{2}(u)  &{\rm if}\; u\in V(\mathcal{S})\setminus  V(\mathcal{S}_{1}) \\
			f_{3}(u) &{\rm if}\;  u\in V(\mathcal{S}_{1}) \\
		\end{array} \right.\nonumber
	\end{equation}
	is an embedding of $T$ in $G$. 
	The proof of Case 1 is complete.

	\noindent\textbf{Case 2.}  $T$ has at least $\eta n$ vertex-disjoint caterpillars of length $k$.	
	
	A caterpillar in $T$ consists of a bare path in $T^{\prime}$ as the central path with some (possibly empty) leaves attached to the internal vertices of the central path, where $T^\prime$ is the subtree obtained by deleting the leaves of $T$ and we say that  the internal vertices attached with leaves  are branch vertices.
	Observe that $T$  either has a family of at least $\frac{\eta n}{2}$ caterpillars of length $k$ that have at least one leaf or a family of at least $\frac{\eta n}{2}$ bare paths of length $k$. Here, we will give a detailed proof for the first subcase and the second subcase can be derived by the same argument.

	Let $n^{\prime}=\frac{\eta n}{2}$ and
	 $k^{\prime}$ be an integer from $\{\frac{k}{2},\frac{k}{2}-1,\frac{k}{2}-2,\frac{k}{2}-3\}$ such that $k^{\prime}=2$ 
	(mod 4). 	
	It is easy to pick a collection of 
	$n^{\prime}$ vertex-disjoint caterpillars of length $k^{\prime}$ in $T$ such that one end of each caterpillar is adjacent to a branch vertex in $T$. 	
	Let $\mathcal{F}=\left\{ F_{1},\dots,F_{n^{\prime}} \right\}$ be such a collection
	and $\mathcal{P}=\left\{ P_{1},\dots,P_{n^{\prime}} \right\}$, where each $P_{i}$ is the central path of $F_{i}$.
	Write $S:=\left\{ s_{1},\dots,s_{n^{\prime}}\right\}$ and $W:=\left\{ w_{1},\dots,w_{n^{\prime}}\right\}$ where $s_{i}$ and $w_{i}$ are the ends of $F_{i}$ and  assume that the neighbor of $s_i$ in $P_i$ is a branch vertex,  $i\in[n^{\prime}]$.
	
	Let $T_{2}$ be a subforest of $T$ obtained by deleting the vertices of $\mathcal{F}$ except the ends of every caterpillar
	and note that $ \vert T_2\vert\leq n-n^\prime k^\prime$.
		In a similar way as Case 1, there exists a partition $\left\{ V_{1},V_{2},V_{3} \right\}$ of $V(G)$
	such that 
	\begin{equation}\label{60}
		d_{V_{i}}(u)\geq\frac{\varepsilon}{2}\vert V_{i} \vert{\rm\ for\ all}\ u \in V(G),\ i\in [3]
	\end{equation}
	where
	\begin{equation}\label{97}
		\vert V_{1}\vert=\vert T_{2} \vert+
		\frac{2\Delta\vert T_{2}\vert+4\Delta d}{d-2\Delta},\,
		\vert
		V_{2}\vert=n^{\prime}(k^{\prime}-1)-\frac{2\Delta\vert T_{2}\vert+4\Delta d}{d-2\Delta}, \,
		\vert V_{3}\vert= n-\vert V_{1}\vert-\vert V_{2}\vert.
	\end{equation}
	Then we have 
	\begin{equation}\label{190}
		\vert V_{3}\vert\geq n-(n-n^{\prime}k^{\prime})-n^{\prime}(k^{\prime}-1)
		= n^{\prime}
		\geq\frac{2\Delta\alpha n}{\varepsilon},
	\end{equation}
	where the first inequality follows since  $\vert T_2\vert\leq n-n^\prime k^\prime$ and  the last inequality follows since  $\alpha\ll \varepsilon,\frac{1}{\Delta}$.  
	
	Note that $\vert T_{2} \vert= \vert V_{1}\vert-4\Delta(\frac{\vert V_{1}\vert}{2d}+1)\leq
	\vert V_{1}\vert-4\Delta \lceil \frac{\vert V_{1}\vert}{2d}\rceil$, where the first equality follows since $\vert V_{1}\vert=\vert T_{2} \vert+
	\frac{2\Delta\vert T_{2}\vert+4\Delta d}{d-2\Delta}$. Then 
	Theorem \ref{9} implies that there exists an embedding $g_1$ of $T_{2}$ in $G[V_{1}]$.
	Let $L_{1}=V_{1}\backslash g_{1}(T_{2})$ and $V_{2}^{\prime}=V_2\cup L_1$.
	 It now remains to embed  $n^{\prime}$ caterpillars in $\mathcal{F}$.

	First, we shall  embed the internal vertices of $\mathcal{P}$ into $V_{2}^{\prime}$.	
	Randomly partition $V_2^{\prime}$ into  $X_1,\dots,X_{k^{\prime}-1}$ each of size $n^{\prime}$.
The union bound and	Chernoff's inequality  imply that, if $n$ is sufficiently large, then there exists a partition such that for every $u\in V(G)$ and $i\in[k^\prime-1]$, we have $d_{X_i}(u)\ge\frac{\varepsilon n^{\prime}}{4}$.
	Since $\alpha^*(G)< \alpha n\leq n^{\prime}$, there exists a matching $M_{1}$ between $g_{1}(S)$ and $X_{1}$ such that $t_1:=\vert E(M_{1})\vert>n^{\prime}-\alpha n $. 
	Similarly, there exists a matching $M_{2}$ between $g_1(W)$ and $X_{k^{\prime}-1}$ such that $t_2:=\vert E(M_{2})\vert> n^{\prime}-\alpha n $. 
	Let $S_1=\{s_1,\dots,s_{t_1}\}\subseteq S$ and $W_1=\{w_1,\dots,w_{t_2}\}\subseteq W$.
	Without loss of generality, we assume that $g_1(S_1)\subseteq V(M_{1})$ and $g_1(W_1)\subseteq V(M_{2})$.
	By the choice of $X_1,\dots, X_{k^\prime-1}$, for each $u\in g_1((S\cup W)G^\prime-Us (S_{1}\cup W_1))\subseteq V(G)$, we have
	\begin{equation}
		d_{X_2}(u)
		\geq\frac{\varepsilon n^{\prime}}{4}
		\ge 2\alpha n
		\geq(n^{\prime}-t_1)+(n^\prime-t_2),\nonumber
	\end{equation}
	and therefore we can greedily find a matching $M_3$ between $g_1(S\setminus S_{1})$ covering $g_1(S\setminus S_{1})$ and $X_2$ and a matching $M_4$ between $g_1(W\setminus W_{1})$ and $X_2$ covering $g_1(W\setminus W_{1})$, where $V(M_3)\cap V(M_4)=\emptyset$.	   
	Let $X_1^\prime:=(V(M_1)\cap X_1)\cup(V(M_3)\cap X_2)$, $X_{k^\prime-1}^\prime:=(V(M_2)\cap X_{k^\prime-1})\cup(V(M_4)\cap X_2)$, $X_2^\prime:=(X_2
	\setminus(V(M_3)\cup V(M_4)))
	\cup(X_1\setminus V(M_1))\cup(X_{k^\prime-1}\setminus V(M_2))$ and let
	$X_i^\prime:=X_i$ for $i\in [3,k^\prime-2]$.
	In this way, we obtain a new partition  $\left\{ X_{1}^\prime,\dots,X^\prime_{k^\prime-1} \right\}$ of $V_{2}^{\prime}$   such that there exist  perfect matchings between $X_1^\prime$ and $g_1(S)$ and between $X^\prime_{k^{\prime}-1}$ and $g_1(W)$.
	Let $X_{1}^\prime=\left\{ x_{1},\dots,x_{n^{\prime}}\right\}$ and $X^\prime_{k^{\prime}-1}=\left\{ y_{1},\dots,y_{n^{\prime}}\right\}$ where for each $i\in [n^{\prime}]$, $\{x_{i},g_1(s_{i})\}$ and $\{y_{i},g_1(w_{i})\}$ are edges of the above perfect matchings.
	
	Let $X^\prime_{0}=\left\{ z_{1},\dots,z_{n^{\prime}}\right\}$ be a new set of vertices disjoint from $V(G)$.		  
	Define an auxiliary graph $H$ with vertex set $V(H)=X^\prime_{0}\cup X^\prime_{2}\cup\cdots\cup X^\prime_{k^{\prime}-2}$. 
	For $2\leq i\leq k^{\prime}-3$, the edges of $H$ between $X^\prime_{i}$ and $X^\prime_{i+1}$ are identical to those of $G$. 
	For $v\in X^\prime_{2}$ and $z_{j}\in X^\prime_{0}$, $\left\{ v,z_{j}\right\}$ is an edge of $H$ if and only if $\left\{v,x_{j}\right\}$ is an edge of $G$. 
	Similarly, for $u\in X^\prime_{k^{\prime}-2}$ and $z_{j}\in X^\prime_{0}$, $\left\{ u,z_{j}\right\}$ is an edge of $H$ if and only if $\left\{ u,y_{j}\right\}$ is an edge of $G$. 
	Observe that if $H$ has a transversal $C_{k^{\prime}-2}$-factor, then $G$ has $n^{\prime}$ vertex-disjoint paths of length $k^{\prime}-2$ that connect $x_{i}$ and $y_{i}$. 
	Since we moved at most $2\alpha n$ vertices when we constructed the new partition, now we have that
	\begin{equation}\label{149}
		\bar{\delta}(H)
		\geq\frac{\varepsilon n^{\prime}}{4}-2\alpha n
		\geq \frac{\varepsilon n^{\prime}}{8}
		>\frac{2n^\prime}{k^\prime-2}. 
	\end{equation}
	Then  Lemma \ref{33} implies that $H$ contains a transversal $C_{k^{\prime}-2}$-factor.
	Together with the perfect matchings, we find an embedding $g_2$ of $V(\mathcal{P})$ to $V_2^{\prime}$ that connects $g_1(s_{i})$ and $g_1(w_{i})$ for each $i\in[n^{\prime}]$. 
	Now it suffices to embed the leaves of caterpillars into $V_{3}$.
	
	Let $I\subseteq V_2^{\prime}$ be the set of images of branch vertices in $V(\mathcal{F})$. 
	Since every vertex in $S$ is adjacent to a branch vertex in $F_i$, we have $X_{1}^\prime\subseteq I$. 
	Let $m:=\alpha n$ and $d:=\Delta$. 
	For all $X\subseteq I$ with $1\leq \vert X\vert\leq m$, by (\ref{60}) and (\ref{190}),
	we have $\vert N_{G}(X)\cap V_{3}\vert \geq \frac{\varepsilon}{2}\vert V_{3} \vert\geq d\vert X\vert$. 
	Moreover  for each $u \in V_{3}$, by (\ref{149}), we have $d_{I}(u) \geq d_{X^\prime_{1}}(u)\geq \frac{\varepsilon n^{\prime} }{8} \geq m$. Together with  
	the assumption that $\alpha^*(G)< m$,
	Lemma \ref{11} implies that there exists an embedding $g_{3}$ of $V(\mathcal{F})\backslash V(\mathcal{P})$ to $V_{3}$ that respects the edges between the branch vertices and the leaves of  caterpillars.
	It follows that the map $g: V(T)\rightarrow V(G)$ defined by
	\begin{equation}
		g(u):=\left\{
		\begin{array}{cl}
			g_{1}(u)& {\rm if}\; u\in V(T_{2}) \\
			g_{2}(u)  &{\rm if}\; u\in V(\mathcal{P}) \\
			g_{3}(u) &{\rm if}\;  u\in V(\mathcal{F})\backslash V(\mathcal{P}) \\
		\end{array} \right.\nonumber
	\end{equation}
	is an embedding of $T$ in $G$. This concludes the proof of the first subcase.
	
	As for the second subcase, we adopt a similar argument   and the main difference is that we   split $V(G)$ into two parts bacause the caterpillars have no leaves and it suffices to find $g_1$ and $g_2$.

\end{proof}

\section{ Transversal $C_{k}$-factor }
\subsection{Proof of  Lemma \ref{33}}
Following the typical absorption method, the main tasks are to ($i$) establish an absorbing set $R$ and ($ii$) find an almost perfect transversal $C_{k}$-tiling in $G-R$. For ($i$), 
we will introduce some related definitions.
\begin{definition}
	\rm {Let $G=(V_{1},\ldots,V_{k},E)$
		be  a spanning subgraph of the $n$-blow-up of $C_{k}$
		and $F$ be 
		a $k$-vertex graph.

		1. We say that a balanced subset $R\subseteq V(G)$ is a $\xi$-\textit{absorbing}\textit{ set} for some $\xi>0$ if for every balanced  subset $U\subseteq V(G)\backslash R$ with $\vert U\vert\leq\xi n$,
		$G[R\cup U]$ contains an $F$-factor which consists of transversal copies.
		
		2. Given a subset $S\subseteq V(G)$ of size $k$
		and an integer $t$, we say that a subset $A_{S}\subseteq V(G) \backslash S$ is an $(F,t)$-\textit{absorber} of $S$ if $\vert A_{S}\vert \leq kt$ and both $G[A_{S}]$ and $G[A_{S}\cup S] $ contain an $F$-factor.}
\end{definition}

Now we state the first crucial lemma, whose proof can be found in Section \ref{13}.

\begin{lemma}\label{3}
	{\rm(Absorbing Lemma)}. Given $k \in \mathbb{N}$ with $k\geq4$ and positive constants $\delta,\gamma$ with $\delta>\frac{2}{k}$ and $\gamma\leq\frac{\delta}{2}$,
	there exist $\alpha, \xi>0 $ such that the following holds for sufficiently large $n \in \mathbb{N} $. 
	Let $G=(V_{1},\ldots,V_{k},E)$ be  a spanning subgraph of the $n$-blow-up of $C_{k}$
	with $\bar{\delta}(G)\geq \delta n$ and $\alpha^*_{\rm b}(G)<\alpha n$. 
	Then there exists a $\xi$-absorbing set $R\subseteq V(G)$ of size at most $\gamma n$.

\end{lemma}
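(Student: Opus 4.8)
The plan is to build the absorbing set $R$ by the standard "random greedy absorption" technique of Hàn--Schacht and Montgomery, adapted to the transversal/blow-up setting. First I would fix a small absorber size $t=t(k)$ and show the key local step: for \emph{every} transversal $k$-set $S=\{v_1,\dots,v_k\}$ with $v_i\in V_i$, there are at least $c n^{kt}$ transversal-$C_k$-tiled $(C_k,t)$-absorbers $A_S$ of $S$. Concretely, a single absorber can be made from a short "gadget": pick a transversal copy $C$ of $C_k$ and connect it to $S$ so that $A_S=V(C)\cup(\text{a few more transversal cycles})$ admits a transversal $C_k$-factor, and $A_S\cup S$ also admits one, by rerouting through $C$. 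The degree condition $\bar\delta(G)\ge\delta n$ with $\delta>2/k$ together with $\alpha^*_{\rm b}(G)<\alpha n$ is exactly what is needed to (a) greedily extend partial transversal paths between prescribed endpoints in consecutive parts, and (b) close them up: whenever we want an edge from a linear-sized set $S'\subseteq V_i$ to a linear-sized set $T'\subseteq V_{i+1}$, the absence of an $(\alpha n,\alpha n)$-bipartite hole guarantees one exists. Counting the choices at each of the $O(kt)$ vertices of the gadget, while only ever needing neighbourhoods of size $\ge\delta n-o(n)$ or cross-pairs of size $\ge\alpha n$, yields the claimed $\Omega(n^{kt})$ absorbers for each $S$.

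Next I would pass from "many absorbers per $S$" to "one random set absorbs everything." Choose $R_0$ by including each balanced transversal $kt$-set independently (or, equivalently, sample a random family of size roughly $\gamma n/(2kt)$ of vertex-disjoint potential absorbers). A first-moment/Chernoff argument shows that with positive probability (i) $|R_0|\le \gamma n$ and $R_0$ is balanced, (ii) the chosen absorbers are pairwise vertex-disjoint after deleting a negligible number of "bad" overlapping ones, and (iii) for every transversal $k$-set $S$, at least one selected absorber $A_S\subseteq R_0$ is an $(C_k,t)$-absorber of $S$ and is disjoint from $S$. Here one uses that the number of transversal $k$-sets is $\le n^k$ while the failure probability for a fixed $S$ is exponentially small in $n$ (since there are $\gg n^{kt}$ candidate absorbers and we sample a positive fraction). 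Deleting the bad absorbers from $R_0$ and re-balancing by throwing away a few more vertices gives a balanced $R$ with $|R|\le\gamma n$ in which $G[R]$ itself has a transversal $C_k$-factor (take the "native" factor of each chosen absorber).

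The final step is to verify the absorbing property. Given a balanced $U\subseteq V(G)\setminus R$ with $|U|\le\xi n$, write $|U\cap V_i|=m$ for each $i$ and set $\xi$ small enough that $m\le t\cdot(\text{number of absorbers in }R)$; then greedily match up the $m$ transversal "leftover" $k$-sets, one at a time, each to a distinct unused absorber $A_S$ inside $R$ — since $A_S\cup S$ has a transversal $C_k$-factor and the remaining absorbers keep their native factors, $G[R\cup U]$ has a transversal $C_k$-factor. This needs a tiny bookkeeping argument that $U$ can be partitioned into transversal $k$-sets (it is balanced, so this is immediate) and that we reserved enough absorbers; choosing $\xi\ll\gamma/(kt)$ suffices.

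The main obstacle I expect is step one: constructing, for an \emph{arbitrary} transversal $k$-set $S$, a flexible $(C_k,t)$-absorber and proving there are polynomially many of them using \emph{only} $\bar\delta(G)\ge\delta n$ and $\alpha^*_{\rm b}(G)<\alpha n$ — in particular, the degree bound $\delta>2/k$ is delicate, since it barely suffices to route transversal paths through prescribed pairs of vertices in consecutive parts (this is presumably why the lemma assumes $\delta>2/k$ rather than something larger, and why $k\ge 4$ is needed so the cycle is long enough to reroute). Getting the absorber gadget right, and checking that both $G[A_S]$ and $G[A_S\cup S]$ have transversal $C_k$-factors with $|A_S|\le kt$, is the crux; everything after that is routine probabilistic deletion.
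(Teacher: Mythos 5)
Your proposal hinges on Step~1: showing that every transversal $k$-set $S$ has $\Omega(n^{kt})$ absorbers, so that a random sample of $\Theta(n)$ of them catches one for each of the $\le n^k$ sets $S$ with exponentially small failure probability. This is the classical H\`an--Schacht / random-greedy route, and it is precisely the step that does \emph{not} go through in this setting. The condition $\alpha^*_{\rm b}(G)<\alpha n$ only guarantees \emph{one} edge between two $\alpha n$-sized sets in consecutive parts; it gives no density, so when you close a transversal cycle (or route a connector through prescribed endpoints) you get existence but no multiplicative lower bound on the number of choices at each step. In particular, for a transversal path $v_1\cdots v_{k-1}$ the sets $N_{V_k}(v_1)$ and $N_{V_k}(v_{k-1})$ each have size $\ge\delta n$ but, since $\delta$ may be well below $1/2$, their intersection can be empty, and the bipartite-hole property does not convert this into $\Omega(n)$ valid closing vertices, nor into any $n^{\Theta(1)}$ supply of distinct gadgets. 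The paper is explicit about this: ``it remains unclear whether this property holds in our setting,'' and abandons the polynomial count entirely.

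The paper instead proves a much weaker statement that is still sufficient: every transversal $k$-set has \emph{linearly many pairwise vertex-disjoint} $(C_k,2k)$-absorbers (Lemma~\ref{5}), obtained via Han's lattice-based absorbing method by first showing each part $V_i$ is $(C_k,\beta n,2)$-closed and then extracting absorbers greedily (a maximal family argument, not a probabilistic count). It then converts ``linearly many disjoint absorbers'' into a bounded-size $\xi$-absorbing set using the Nenadov--Pehova framework together with Montgomery's sparse bipartite templates (Lemma~\ref{31}): random sets $X_i\subseteq V_i$ so that each vertex retains many $C_k$-fans inside $\bigcup X_i$, a template graph $T$ of maximum degree $40$, and one disjoint absorber chosen for each edge of $T$. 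Your final bookkeeping step (partition $U$ into transversal $k$-sets and feed them to unused absorbers) is also not quite how the template construction works; there the flexibility comes from the perfect-matching property of $B[X'_m\cup Y_m,Z_m]$ after removing $Q_i\subseteq X_i$, not from a one-to-one assignment of leftover $k$-sets to reserved absorbers. So the gap is not cosmetic: without a polynomial absorber count, your random-deletion argument has no concentration to lean on, and a genuinely different mechanism (templates plus linearly many disjoint absorbers) is needed.
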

For ($ii$), Lemma $\ref{4}$ provides an
almost transversal $C_{k}$-tiling, whose proof will be given  in Section \ref{15}.

\begin{lemma} \label{4}
	{\rm(Almost perfect tiling)}.
	Given a positive integer $k\in 4\mathbb{N}$ and constants $\delta,\zeta$ with $\delta>\frac{2}{k}$, there exists $\alpha>0$ such that the following holds for sufficiently large $n \in \mathbb{N} $. Let $G=(V_{1},\ldots,V_{k},E)$ be  a spanning subgraph of the n-blow-up of $C_{k}$ with $\bar{\delta}(G)\geq \delta n$ and $\alpha^*_{\rm b}(G)<\alpha n$.  Then $G$ contains a transversal $C_{k}$-tiling covering all but at most $\zeta n$ vertices.
\end{lemma}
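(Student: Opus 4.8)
The plan is to build the transversal $C_k$-tiling greedily, at each step removing a vertex-disjoint transversal copy of $C_k$; since a copy uses exactly one vertex of each part, the current parts $W_1,\dots,W_k$ ($W_i\subseteq V_i$) stay of equal size, and the process is run until fewer than $\zeta n$ vertices remain uncovered. The core of the argument is thus a ``single copy'' statement: as long as the balanced parts $W_i$ are not yet too small, $G[W_1,\dots,W_k]$ still contains a transversal $C_k$. Two facts drive this. First, the bipartite-hole hypothesis survives vertex deletions, $\alpha^*_{\rm b}(G[W_1,\dots,W_k])\le\alpha^*_{\rm b}(G)<\alpha n$, and it can be restated as an expansion property: for every $i$ and every $S\subseteq W_i$ with $|S|\ge\alpha n$ one has $|N_{W_{i+1}}(S)|>|W_{i+1}|-\alpha n$, since otherwise $S$ together with $\alpha n$ non-neighbours in $W_{i+1}$ would form a bipartite hole. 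Second, while $\min_i|W_i|$ is still a fixed proportion of $n$, the minimum degree of the leftover bipartite graphs, although smaller than $\bar\delta(G)\ge\delta n$, is still much larger than $\alpha n$.

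To exhibit one transversal $C_k$ I would combine the two. Pick $v_1\in W_1$ with at least $\alpha n$ neighbours in each of $W_2$ and $W_k$ (available from the current minimum degree), then propagate candidate sets $A_2:=N_{W_2}(v_1)$ and $A_{i+1}:=N_{W_{i+1}}(A_i)$ for $i=2,\dots,k-2$; the expansion property gives $|A_i|>|W_i|-\alpha n$ for $i\ge 3$, so $N_{W_k}(A_{k-1})$ has size $>|W_k|-\alpha n$ and therefore meets the set $N_{W_k}(v_1)$ of size $\ge\alpha n$. A vertex $v_k$ in that intersection is adjacent to $v_1$, and tracing back through the $A_i$ from $v_k$ produces $v_{k-1}\in A_{k-1},\dots,v_2\in A_2=N_{W_2}(v_1)$, i.e.\ a transversal $C_k$ on $v_1,\dots,v_k$. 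Extracting such copies one at a time is unproblematic as long as the minimum degree does the work, and the restriction $k\in4\mathbb N$ together with the bound $\delta>2/k$ surface only in the bookkeeping that bridges the gap to ``all but $\zeta n$''.

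That bridge is the main obstacle. Each extracted copy may cost a neighbour from many neighbourhoods, so after $\Theta(n)$ extractions the leftover minimum degree can have fallen to $o(n)$, and then the closing step above --- which used $|N_{W_k}(v_1)|\ge\alpha n$ --- no longer applies: the expansion property controls how $\alpha n$-sets grow but is silent about individual low-degree vertices, and closing a transversal path into a cycle by a single neighbourhood intersection breaks down. In this regime the hole condition must carry the tiling on its own, which is the delicate part; a natural route is to proceed in batches --- first an almost-perfect matching in $G[W_k,W_1]$, then almost all of the vertex-disjoint transversal paths through $W_2,\dots,W_{k-1}$ joining the matched endpoints, obtained again from candidate-set propagation between consecutive parts --- and to iterate, covering a constant fraction of what remains each round until at most $\zeta n$ vertices are left. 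Verifying that a constant proportion really is covered in each round, now with no help from the minimum degree and using only the (relatively strong) inherited condition $\alpha^*_{\rm b}<\alpha n\ll|W_i|$, is where the proof needs the most care; an alternative worth considering is a regularity-based argument, transferring the minimum-degree and hole conditions to a bounded reduced graph, tiling it, and blowing the copies up, at the cost of the usual regularity overhead. The remaining ingredients --- the greedy set-up, keeping the parts balanced, and the final clean-up --- are routine.
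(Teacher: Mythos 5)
Your diagnosis of the main obstruction is exactly right: the bipartite-hole condition alone controls only how $\alpha n$-sets expand, not the degree of any single vertex, so the step that closes a transversal path into a cycle --- which in your one-copy construction relies on a vertex $v_1$ retaining at least $\alpha n$ neighbours in both $W_2$ and $W_k$ --- collapses once $\Theta(n)$ extractions have eroded the leftover minimum degree. However, neither of the two fixes you gesture at is carried out, and the first one has a further gap you do not address: even granting an almost-perfect matching in $G[W_k,W_1]$ and a near-spanning family of vertex-disjoint transversal $W_1$--$W_k$ paths, nothing forces a given path to start and end at a \emph{matched} pair. Candidate-set propagation produces disjoint paths whose $W_k$-endpoints land essentially arbitrarily, and the hole condition only says that \emph{most} pairs of large sets touch, not that every specific endpoint pair is an edge. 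Closing the cycles is precisely where you need positive degree information, and in the leftover graph you have none.

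Your throwaway ``alternative worth considering'' --- regularity --- is what the paper actually does, and it is worth seeing why it sidesteps the obstruction rather than merely shuffling it around. After Lemma~\ref{19} one works inside fixed regular pairs $(U_{i,i},U_{i+1,i+1})$ of density at least $d=\eta/4$; Fact~\ref{22} then guarantees, inside any still-large subset $Z_i$, a large set of vertices each having $(d-\varepsilon)|Z_{i\pm1}|$ neighbours in $Z_{i\pm1}$, a quantity that stays a \emph{constant fraction} of $m$ rather than decaying with the number of extractions. This is the persistent ``closing'' degree your greedy argument lacks. The remaining issue is that regular pairs are only available along edges of the reduced graph $R_d$, which has $\bar{\delta}(R_d)\approx(2/k)N_0$ --- far too small for $R_d$ itself to contain transversal $C_k$'s. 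The paper's workaround is the auxiliary graph $H$: $k$ clusters with just \emph{two} disjoint regular edges, say between $\mathcal{V}_1,\mathcal{V}_2$ and between $\mathcal{V}_3,\mathcal{V}_4$; the two regular pairs supply the closing degree, and the edge $\{u,v\}$ between $Z_2'$ and $Z_3'$ together with Proposition~\ref{90} supplies the rest of the cycle from the hole condition alone. Covering $R_d$ by $N_0$ disjoint copies of $H$ uses matchings $M_1,\dots,M_{k/2}$ between consecutive $(\mathcal{V}_{2i-1},\mathcal{V}_{2i})$ of size $\min\{N_0,2\bar{\delta}(R_d)\}$, and the count $\sum_{i\le k/4}|M_{2i-1}|\ge(k/4)\cdot(4/k)N_0\ge N_0$ is exactly where both $\delta>2/k$ and $k\in4\mathbb{N}$ enter --- the divisibility is not mere bookkeeping but the pairing of matchings into alternating blocks of four clusters. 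In short: you found the right difficulty, you named the right tool, but you did not identify the mechanism (regular pairs providing degree that does not decay) or the structure ($H$-cover of the reduced graph) that make the tool do the work.
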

Now we are ready to prove  Lemma \ref{33} using Lemma \ref{3} and Lemma \ref{4}.

\begin{proof}[Proof of Lemma \ref{33}]
	Given $k \in 4\mathbb{N}$ and a constant $\delta$ with $\delta>\frac{2}{k}$, we set $\eta:=\delta-\frac{2}{k}$ and
	choose
	$\frac{1}{n}\ll\alpha\ll\zeta\ll\xi\ll\gamma\ll\eta,\delta$.  
	Let $G=(V_{1},\ldots,V_{k},E)$ be  a spanning subgraph of the $n$-blow-up of $C_{k}$ with $\bar{\delta}(G)\geq (\frac{2}{k}+\eta) n$ and $\alpha^*_{\rm b}(G)<\alpha n$. 
	
	By Lemma \ref{3} and 
	the choice that $\gamma \ll\eta,\delta$, there exists a $\xi$-absorbing set $R\subseteq V(G)$ of size at most $\gamma n$ for some $\xi>0$.
	Let $G^{\prime}:=G-R$ and note that $G^\prime$ is an $(n-\frac{\vert R\vert}{k})$-blow-up of $C_k$.
	Then we have
	\begin{equation}
		\bar{\delta}(G^{\prime})\geq \left(\frac{2}{k}+\eta\right) n-\frac{\gamma n}{k}\geq\left(\frac{2 }{k}+\frac{\eta}{2} \right)
		n.\nonumber
	\end{equation}
	Therefore by applying Lemma \ref{4} on $G^{\prime}$, we obtain a transversal $C_{k}$-tiling $\mathcal{M}$ that covers all but a set $U$ of at most $\zeta n$ vertices
	in $G^{\prime}$. Since $\zeta\ll\xi$, the absorbing property of $R$ implies that
	$G[R\cup U]$ contains a transversal $C_{k}$-factor, which together with  $\mathcal{M}$  forms
	a transversal $C_{k}$-factor in $G$.
	
\end{proof}

\subsection{Regularity}
The proof of Lemma \ref{4} is based on a standard application of the regularity method. 
We will introduce some basic definitions and properties.
Given a graph $G$ and a pair $(X,Y)$ of vertex-disjoint subsets in $V(G)$, the
\textit{density} of $(X,Y)$ is defined as
\begin{equation}
	d(X,Y)=\frac{e(X,Y)}{\vert X \vert \vert Y \vert}. \nonumber
\end{equation}
Given constants $\varepsilon,d>0$, we say that $(X,Y)$ is \textit{$(\varepsilon,d)$-regular} if $d(X,Y)\geq d$ and for all $X^{\prime} \subseteq X$, $Y^{\prime} \subseteq Y$ with $\vert X^{\prime} \vert \geq \varepsilon \vert X \vert$ and $\vert Y^{\prime} \vert \geq \varepsilon \vert Y \vert$, we have
\begin{equation}
	\vert d(X^{\prime},Y^{\prime})-d(X,Y)\vert \leq \varepsilon. \nonumber
\end{equation}
The following fact results from the
definition.	
\begin{fact}\label{22}
	\rm{Let $(X,Y)$ be an $(\varepsilon,d)$-regular pair and $B \subseteq Y$ with $\vert B\vert\ge\varepsilon\vert Y\vert.$
	Then all but at most $\varepsilon\vert X\vert$ vertices in $X$ have  at least $(d-\varepsilon)\vert B\vert$ neighbors in $B$}.
\end{fact}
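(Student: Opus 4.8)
The plan is to argue by contradiction, invoking only the defining property of an $(\varepsilon,d)$-regular pair. Let $X' := \{x\in X : d_B(x) < (d-\varepsilon)|B|\}$ be the set of ``bad'' vertices of $X$, i.e. those with too few neighbors in $B$. The claim is exactly that $|X'| \le \varepsilon|X|$, so I would suppose for contradiction that $|X'| > \varepsilon|X|$.

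Under this assumption the pair $(X',B)$ meets the size thresholds in the definition of regularity: $|X'| \ge \varepsilon|X|$ by assumption and $|B| \ge \varepsilon|Y|$ by hypothesis. Hence $|d(X',B) - d(X,Y)| \le \varepsilon$, and since $d(X,Y) \ge d$ this gives the lower bound $d(X',B) \ge d(X,Y) - \varepsilon \ge d - \varepsilon$. On the other hand, summing the degree bound defining $X'$ yields $e(X',B) = \sum_{x\in X'} d_B(x) < |X'|\,(d-\varepsilon)|B|$, so $d(X',B) = e(X',B)/(|X'||B|) < d-\varepsilon$, contradicting the previous inequality.

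There is no substantive obstacle here; this is the standard ``most vertices have large degree into a linear-sized subset'' consequence of regularity. The only points to handle with a little care are the direction of the inequalities and the fact that one only needs $|X'| \ge \varepsilon|X|$ (a non-strict bound) to apply regularity, which is automatic once we have assumed $|X'| > \varepsilon|X|$; also the density lower bound uses $d(X,Y)\ge d$, which is part of the definition of an $(\varepsilon,d)$-regular pair.
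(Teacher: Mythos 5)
Your proof is correct and is the standard argument; the paper itself gives no proof, simply noting that the fact follows from the definition of regularity, and your argument is exactly the immediate consequence the authors have in mind.
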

We now state a degree form of the regularity lemma (see
\cite{1991Szemer}, Theorem 1.10). 

\begin{lemma}\label{19}
	{\rm(Degree form of  Regularity Lemma \cite{1991Szemer}).} 
	For every $\varepsilon>0$ there is an $N = N(\varepsilon)$
	such that the following holds for any real number $d\in(0,1]$ and $n\in \mathbb{N}$. Let $G=(V, E)$ be an $n$-vertex graph.
	Then there exists a partition $\mathcal{P}=V_{0}\cup V_{1}\cup\dots\cup V_{k}$  
	and a spanning subgraph
	$G^{\prime} \subseteq G$ with the following properties:	\\
	{\rm(\textit{a})} $\frac{1}{\varepsilon} \leq k \leq N$;\\
	{\rm(\textit{b})} $\vert V_{0} \vert \leq \varepsilon n$ and
	$\vert V_{1} \vert=\dots=\vert V_{k} \vert=m\leq \varepsilon n$;\\
	{\rm(\textit{c})} $d_{G^{\prime}}(v)\geq d_{G}(v)-(d+\varepsilon)n$ for all $v \in V(G)$;\\
	{\rm(\textit{d})} every $V_{i}$ is an independent set in $G^{\prime}$ for $i\in[k]$;\\
	{\rm(\textit{e})} every pair $(V_{i}, V_{j})$, $1 \leq i<j \leq k$  is $\varepsilon$-regular in $G^{\prime}$ with density $0$ or at least $d$.
\end{lemma}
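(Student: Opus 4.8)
The plan is to deduce the degree form from the classical Szemer\'edi Regularity Lemma~\cite{1991Szemer}, by first slightly refining the partition it returns and then passing to a spanning subgraph. Given $\varepsilon>0$, fix an auxiliary constant $\varepsilon_0$ with $\varepsilon_0\ll\varepsilon$ and apply the classical Regularity Lemma to $G$ with parameter $\varepsilon_0$ and with $\lceil1/\varepsilon_0\rceil$ as the prescribed lower bound on the number of non-exceptional classes; this returns a partition $W_0\cup W_1\cup\dots\cup W_\ell$ with $1/\varepsilon_0\le\ell\le N_0(\varepsilon_0)=:N(\varepsilon)$, $|W_0|\le\varepsilon_0 n$, $|W_1|=\dots=|W_\ell|=m\le\varepsilon_0 n$, and all but at most $\varepsilon_0\binom{\ell}{2}$ of the pairs $(W_i,W_j)$ $\varepsilon_0$-regular.

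Call an $\varepsilon_0$-regular pair $(W_i,W_j)$ \emph{sparse} if its density is below $d+\varepsilon_0$, and call $v\in W_i$ \emph{heavy} in a regular pair $(W_i,W_j)$ if $d_{W_j}(v)\ge(d+\varepsilon_0)m$. First I would clean up the partition, moving into a growing exceptional set: (i) each class $W_i$ lying in more than $\sqrt{\varepsilon_0}\,\ell$ irregular pairs --- there are at most $\sqrt{\varepsilon_0}\,\ell$ of these, since the irregular pairs number at most $\varepsilon_0\binom{\ell}{2}$; and (ii) each vertex heavy in more than $\sqrt{\varepsilon_0}\,\ell$ sparse pairs --- by the ``many neighbours'' version of Fact~\ref{22}, a sparse pair has fewer than $\varepsilon_0 m$ heavy vertices on each side, so double counting removes at most $\sqrt{\varepsilon_0}\,\ell m\le\sqrt{\varepsilon_0}\,n$ vertices here, fewer than $\sqrt{\varepsilon_0}\,m$ from any one class. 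Trimming each surviving class down to a common size $m'\ge(1-2\sqrt{\varepsilon_0})m$ and relabelling as $V_0\cup V_1\cup\dots\cup V_k$: each of these steps enlarges the exceptional set by $O(\sqrt{\varepsilon_0})\,n$, so $|V_0|\le\varepsilon n$, while $k\le\ell\le N(\varepsilon)$, $k\ge(1-\sqrt{\varepsilon_0})\ell\ge1/\varepsilon$ and $|V_1|=\dots=|V_k|=m'\le\varepsilon n$, giving (a) and (b). Finally define $G'\subseteq G$ by deleting every edge inside some $V_i$ with $i\ge1$ and, for each pair $(V_i,V_j)$ with $1\le i<j\le k$, deleting all of its edges unless $(V_i,V_j)$ is $\varepsilon$-regular in $G$ with $G$-density at least $d$; \emph{all edges incident to $V_0$ are kept}. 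Then (d) is immediate, and (e) holds because the edges of a kept pair are untouched in $G'$, so its $\varepsilon$-regularity and density persist, while every other pair is empty in $G'$.

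The heart of the matter is (c), which must hold for \emph{every} $v\in V(G)$. If $v\in V_0$, no edge at $v$ is deleted, so $d_{G'}(v)=d_G(v)$ and there is nothing to prove. For $v\in V_i$ with $i\ge1$, I would bound the deleted edges at $v$ by splitting the deleted pairs $(V_i,V_j)$ according to the type of the original pair $(W_{i'},W_{j'})\supseteq(V_i,V_j)$. If $(W_{i'},W_{j'})$ was $\varepsilon_0$-regular of density at least $d+\varepsilon_0$, then inheritance of regularity and of density under large sub-pairs makes $(V_i,V_j)$ $\varepsilon$-regular in $G$ of density at least $d$, so it is kept, not deleted; hence deleted pairs come only from irregular or sparse original pairs. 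Since $V_i$ survived step (i), at most $\sqrt{\varepsilon_0}\,\ell$ deleted pairs come from irregular originals, costing $v$ at most $\sqrt{\varepsilon_0}\,\ell m\le\sqrt{\varepsilon_0}\,n$ edges; since $v$ survived step (ii), in at most $\sqrt{\varepsilon_0}\,\ell$ sparse originals is $v$ heavy, again costing at most $\sqrt{\varepsilon_0}\,n$ edges; and in each remaining sparse original $v$ has fewer than $(d+\varepsilon_0)m$ neighbours, costing in total fewer than $(d+\varepsilon_0)\ell m\le(d+\varepsilon_0)n$ edges. Adding the at most $m'\le\varepsilon_0 n$ intra-$V_i$ edges, $d_G(v)-d_{G'}(v)<(d+\varepsilon_0)n+O(\sqrt{\varepsilon_0})\,n<(d+\varepsilon)n$ once $\varepsilon_0$ is small enough in terms of $\varepsilon$, which is (c).

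I expect the main obstacle to be precisely this per-vertex estimate: regularity of a sparse pair controls only the \emph{average} degree across it, so a single atypical vertex could be adjacent to almost all of very many sparse classes and thereby lose far more than $dn$ edges; cleanup step (ii), which banishes such vertices to $V_0$ where their edges are preserved, is exactly what forces the bound to hold for \emph{all} vertices. The remaining items --- that trimming classes to a uniform size inflates $V_0$ and the $G'$-deletions only negligibly, and that $\varepsilon$-regularity and density are inherited by the trimmed classes --- are routine, but must be tracked so that (a)--(e) hold simultaneously.
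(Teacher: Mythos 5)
The paper contains no proof of Lemma~\ref{19} to compare against: it is quoted as Theorem~1.10 of the Koml\'os--Simonovits survey \cite{1991Szemer} and used as a black box. Your derivation from the classical Regularity Lemma is the standard one (remove clusters lying in many irregular pairs, banish vertices heavy in many sparse pairs, trim to a common size, and delete intra-cluster edges and edges of irregular or sparse pairs, keeping everything at $V_0$), and its architecture is sound; in particular you correctly identify that the per-vertex bound (c) is the only delicate point and that the heavy-vertex cleanup is what makes it hold for every vertex, and the inheritance argument showing that dense regular original pairs survive trimming as $\varepsilon$-regular pairs of density at least $d$ is fine.

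One threshold needs repair. With ``sparse'' meaning density below $d+\varepsilon_0$ and ``heavy'' meaning degree at least $(d+\varepsilon_0)m$, the claim that each sparse pair has fewer than $\varepsilon_0 m$ heavy vertices per side does not follow from regularity when the pair's density $p$ lies in $[d,d+\varepsilon_0)$: $\varepsilon_0$-regularity only forbids more than $\varepsilon_0 m$ vertices of degree at least $(p+\varepsilon_0)m$, and $(d+\varepsilon_0)m$ can be below that bar, so in principle a constant fraction of the vertices could be heavy in many sparse pairs, and then your step (ii) would push far more than $\sqrt{\varepsilon_0}\,n$ vertices into $V_0$, ruining (b). The fix is immediate: set the heaviness threshold at $(d+2\varepsilon_0)m$ (equivalently, measure heaviness against the pair's own density plus $\varepsilon_0$). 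Then each sparse pair indeed has at most $\varepsilon_0 m$ heavy vertices per side by the many-neighbours form of Fact~\ref{22}, the double counting in (ii) goes through, and the non-heavy contribution in the count for (c) becomes at most $(d+2\varepsilon_0)n$, still comfortably below $(d+\varepsilon)n$ once $\varepsilon_0\ll\varepsilon$. With that one-line adjustment your proof is correct.
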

A widely-used auxiliary graph accompanied with the regular partition is the reduced graph. The\textit{ reduced graph} $R_{d}$ of $\mathcal{P}$ is a graph defined on the vertex set $\left\{ V_{1},\ldots,V_{k} \right\}$ such that $V_{i}$
is adjacent to $V_{j}$ in $R_{d}$
if $(V_{i},V_{j})$ has density at least $d$ in $G^{\prime}$.
We use $d_{R}(V_{i})$ to denote the degree of $V_{i}$
in $R_{d}$ for each $i\in[k]$.

\begin{fact}\label{20}
	Given positive constants $d,\varepsilon$ and $\delta$, fix an $n$-vertex graph $G=(V,E)$ with $\delta(G)\geq \delta n$ and let $G^{\prime}$ and $\mathcal{P}$ be obtained by Lemma \ref{19}, and $R_{d}$ be given as above. Then for every $V_{i}\in V(R_{d})$, we have $d_{R}(V_{i})\geq(\delta-2\varepsilon-d)k$. 
\end{fact}

\subsection{Almost perfect tilings}\label{15}

Here we shall make use of the following result which provides a sufficient
condition for a transversal path among given sets.

	 \begin{proposition}\label{90}
		Given an integer $k\ge2$ and a positive  constant $\alpha\leq\frac{1}{2}$,  let $G=(V_{1},\ldots,V_{k},E)$ be a spanning subgraph of the $n$-blow-up of $C_k$ with  $\alpha^*_{\rm b}(G)<\alpha n$. 
		For any integers $i,j$ with $1\leq i<j\leq k$ and a collection of subsets $X_{s}\subseteq V_s$ with $s\in [i,j]$, if $\vert X_{i}\vert$, $\vert X_{j}\vert\geq\alpha n$ and $\vert X_{\ell}\vert\geq2\alpha n$ for $\ell\in [i+1,j-1]$ {\rm(}possibly empty{\rm)}, then there exists a transversal path   $x_{i}x_{i+1}\dots x_{j-1}x_{j}$   where $x_{s}\in X_{s}$ for $s\in [i,j]$.
	\end{proposition}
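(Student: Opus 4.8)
The plan is to build the transversal path greedily, one vertex at a time, from $x_i$ up to $x_j$, maintaining at each stage enough unused vertices in the remaining sets that the bipartite-hole hypothesis can be invoked to find the next edge. Concretely, I would process the indices $i, i+1, \dots, j$ in order. Having already chosen $x_i \in X_i, \dots, x_{s} \in X_s$ for some $s < j$, I want to pick $x_{s+1} \in X_{s+1}$ with $x_{s+1} \in N_G(x_s)$, subject to the constraint that for the last vertex I also need $x_j$ to have already been reserved (or chosen compatibly). The key quantitative point is that the sets $X_\ell$ with $i+1 \le \ell \le j-1$ have size at least $2\alpha n$, while the "endpoint" sets $X_i, X_j$ only have size at least $\alpha n$; this asymmetry is exactly what a one-sided greedy walk needs, because an internal set loses one vertex when a neighbor is chosen in it and must still have $\ge \alpha n$ vertices left to keep the process alive.

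First I would dispose of the degenerate case $j = i+1$: here $X_i, X_j$ are disjoint sets in $V_i, V_{i+1}$ of size $\ge \alpha n > \alpha^*_{\rm b}(G) \cdot$ (well, $\ge \alpha n$), so since $\alpha^*_{\rm b}(G) < \alpha n$ there is an edge between $X_i$ and $X_j$, giving the path $x_i x_j$ directly. For $j \ge i+2$, the cleanest way to handle the "both endpoints are small" issue is to work from both ends toward the middle, or — simpler — to note that we really only need to choose $x_i$ carefully. I would instead argue as follows: greedily choose $x_i \in X_i$, then $x_{i+1} \in N_G(x_i) \cap X_{i+1}$, then $x_{i+2} \in N_G(x_{i+1}) \cap X_{i+2}'$, and so on, where at step $\ell$ (for $i < \ell < j$) the available set is $X_\ell$ with the single previously used vertex (if any — there is none at a fresh index) removed, so it still has $\ge 2\alpha n - 1 \ge \alpha n$ vertices. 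To guarantee each step succeeds, observe that if no neighbor of $x_{\ell}$ lies in the available subset $X'_{\ell+1}$ of $X_{\ell+1}$, then $\{x_\ell\}$ together with $X'_{\ell+1}$ would be a bipartite hole; since $|X'_{\ell+1}| \ge \alpha n > \alpha^*_{\rm b}(G)$, this is fine provided we first pad $\{x_\ell\}$ out — but a single vertex has no common neighbor guarantee from $\alpha^*_{\rm b}$ alone. So I must be more careful: the bipartite-hole bound only controls holes of size $(s,s)$, not $(1,s)$.

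To fix this, the correct approach is to select a small \emph{buffer} at each internal step. I would choose, for each $i < \ell < j$, a subset $Y_\ell \subseteq X_\ell$ with $|Y_\ell| = \alpha n$ that will be "reserved to hit," and proceed as follows: pick any $x_i \in X_i$; having picked $x_\ell$, let $Z = N_G(x_\ell) \cap X_{\ell+1}$ if $\ell + 1 < j$ — but again a single vertex's neighborhood is not controlled. The honest resolution, and the one I expect the authors use, is to pick the edges "two at a time" using sets: since $|X_i| \ge \alpha n$ and $|X_{i+1}| \ge 2\alpha n > \alpha n$, choose a set $A_{i+1} \subseteq X_{i+1}$ of size $\alpha n$ such that $e_G(X_i, A_{i+1}) > 0$ — automatic from $\alpha^*_{\rm b}(G) < \alpha n$ — giving an edge $x_i x_{i+1}$; the point of size $2\alpha n$ is that after we commit $x_{i+1}$, the set $X_{i+2}$ still has $\ge 2\alpha n$ room to choose an $\alpha n$-subset $A_{i+2}$ with $e_G(\{x_{i+1}\}\cup(\text{rest}), A_{i+2})>0$. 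I would formalize this by a downward induction on $j - i$: apply the $j = i+1$ case to get the edge $x_i x_{i+1}$ inside $X_i \times X_{i+1}$, then set $X_i' := \{x_{i+1}\}$ is too small, so instead induct keeping the invariant that the \emph{first} set has size $\ge \alpha n$. Precisely: choose $x_j \in X_j$ first; then by the $j=i+1$ base case applied between $X_{j-1}$ (size $\ge 2\alpha n \ge \alpha n$) and $X_j$... this still picks only one vertex. The clean statement is: prove by induction on $j-i$ that if $|X_i| \ge \alpha n$, $|X_j| \ge \alpha n$, and $|X_\ell| \ge 2\alpha n$ for internal $\ell$, a transversal path exists, where in the inductive step I find an edge between $X_i$ and $X_{i+1}$ (possible since both have $\ge \alpha n$ vertices and $\alpha^*_{\rm b}(G) < \alpha n$), fix its endpoint $x_{i+1}$, and apply induction to the sequence $X_{i+1}' := X_{i+1} \setminus (\text{something})$ — no: apply induction to the sequence starting at index $i+1$ with first set $\{x_{i+1}\}$, which fails the $\ge \alpha n$ hypothesis.

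**The real plan.** The main obstacle is precisely this mismatch, and the way around it is to select the path's vertices using a \emph{reservoir} of candidate edges rather than committing greedily. I would do the following. For the base case $j=i+1$, use $\alpha^*_{\rm b}(G) < \alpha n$ directly as above. For $j \ge i+2$: since $|X_{i+1}| \ge 2\alpha n$, split off $A \subseteq X_{i+1}$ with $|A| = \alpha n$ and let $B = X_{i+1} \setminus A$, $|B| \ge \alpha n$. Apply the base case to $X_i$ and $B$ (both $\ge \alpha n$, disjoint in consecutive parts) to get an edge $x_i x_{i+1}$ with $x_{i+1} \in B$. Now apply induction to the sequence $A' := A \cup \{?\}$ — no. Cleanest: apply induction to $X_{i+1}'', X_{i+2}, \dots, X_j$ where $X_{i+1}'' := $ the set $B$ restricted to neighbors of $x_i$? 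That set has uncontrolled size. I think the genuinely correct and simplest route, which I will write up, is the \textbf{two-ended greedy with a meeting point}: run a greedy walk from $x_i$ forward using the fact that at each forward step $\ell \to \ell+1$ we only discard \emph{one} already-used vertex from $X_{\ell+1}$ so $|X_{\ell+1}$ available$| \ge 2\alpha n - 1$, and — crucially — replace "$x_\ell$ has a neighbor in the available set" with the correct lemma: among the available set $X'$ of size $\ge \alpha n$ and the \emph{original} set $X_\ell \supseteq \{x_\ell\}$ of size $\ge \alpha n$... still $(1, \ge\alpha n)$. I concede the cleanest correct formalization: prove it with the stronger inductive hypothesis that one may \emph{prescribe} $x_i$ to be \emph{any} vertex of $X_i$ — false in general. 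Therefore the write-up will use: choose a linear-sized "core" $C_\ell \subseteq X_\ell$ of size $\alpha n$ for each internal $\ell$, and observe that $\alpha^*_{\rm b}(G) < \alpha n$ forces, for \emph{all but fewer than} $\alpha n$ vertices $v \in X_\ell$, that $v$ has a neighbor in $C_{\ell+1}$ (else those vertices plus $C_{\ell+1}$ form a hole) — \emph{this is the key observation} — so the set $G_\ell \subseteq X_\ell$ of vertices with a neighbor in $C_{\ell+1}$ has $|G_\ell| > |X_\ell| - \alpha n \ge \alpha n$, and symmetrically for neighbors in $C_{\ell-1}$. Then build the path by choosing $x_\ell$ inside the "good" sets and using $C_\ell$'s as the actual landing spots, which is a routine alternating construction; the sizes $2\alpha n$ on internal sets give exactly the slack needed to have both a core $C_\ell$ and enough good vertices left after earlier choices. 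The main obstacle, as the above meandering shows, is purely bookkeeping: arranging the cores and the good-vertex sets so that the greedy choice never runs out, and this is where the $2\alpha n$ versus $\alpha n$ distinction does all the work.
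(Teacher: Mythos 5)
Your ``key observation'' is indeed the engine of the paper's proof: for any $A\subseteq V_{\ell+1}$ of size $\ge\alpha n$, all but fewer than $\alpha n$ vertices of $X_\ell$ have a neighbor in $A$ (else the exceptional set, together with $A$, would be an $(\alpha n,\alpha n)$-bipartite-hole). You also correctly diagnose that $\alpha^*_{\rm b}$ gives no control over the neighborhood of a \emph{single} vertex, which is why a naive one-vertex-at-a-time greedy walk fails. But the closing step you sketch --- fix cores $C_\ell\subseteq X_\ell$ of size $\alpha n$ in advance, define $G_\ell$ as the vertices of $X_\ell$ with a neighbor in $C_{\ell+1}$, then run a ``routine alternating construction'' --- has a genuine gap: after stepping from a good vertex $x_\ell\in G_\ell$ into some $x_{\ell+1}\in N(x_\ell)\cap C_{\ell+1}$, nothing forces $x_{\ell+1}$ to lie in $G_{\ell+1}$. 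Since $|C_{\ell+1}|=\alpha n$ and $|G_{\ell+1}|>|X_{\ell+1}|-\alpha n$, the intersection $C_{\ell+1}\cap G_{\ell+1}$ can be empty if $|X_{\ell+1}|>2\alpha n$, and even when it is nonempty you cannot land $x_{\ell+1}$ there, because $N(x_\ell)\cap C_{\ell+1}$ is just whatever small, uncontrolled set it happens to be. To close this you would have to choose $C_{\ell+1}\subseteq G_{\ell+1}$, which forces a recursive definition from the far end --- at which point the cores \emph{are} the good sets and you should say so.

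The paper packages exactly that recursion without cores: set $Z_i:=X_i$ and, for $\ell=i,\dots,j-2$, $Z_{\ell+1}:=N(Z_\ell)\cap X_{\ell+1}$. Your own key observation gives $|Z_{\ell+1}|>|X_{\ell+1}|-\alpha n\ge\alpha n$ for internal $\ell+1$ (this is where $2\alpha n$ versus $\alpha n$ does its work), so the propagation never dies; at the end $|Z_{j-1}|,|X_j|\ge\alpha n$ forces an edge $x_{j-1}x_j$, and one then backtracks through the $Z_\ell$'s (each $x_{\ell}\in Z_\ell$ has, by construction, a neighbor in $Z_{\ell-1}$) to extract the path. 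The moral is that you should never commit a vertex until the very last step: propagate sets forward, find one edge at the far end, backtrack. Your writeup circles this idea but does not state it, and the version you do commit to in the final paragraph does not close.
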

	
\begin{proof}[Proof of Proposition \ref{90}]
	Without loss of
	generality, we may take $i=1$, $j=k$
	for instance.
	Let $Z_{1}:=X_{1}$ and $Z_{2}:=N(Z_{1})\cap X_{2}$.
	By the fact that  $\alpha^*_{\rm b}(G)<\alpha n\leq \vert Z_{1} \vert, \vert X_{2} \vert$, it holds that $\vert Z_{2}\vert> \vert X_{2}\vert-\alpha n\geq\alpha n $.
	If $k=2$, then there exists an edge  $\left\{ x_{1},x_{2} \right\}$ between $Z_1$ and $Z_2$ and we are done.
	If $k>2$, then for each $s\in [k-1]$, there exist $Z_{s}\subseteq X_{s}$  of size larger than $\alpha n$ such that $Z_{s}$ is the set of neighbors of $Z_{s-1}$. 
	Since $\vert Z_{k-1}\vert$, $\vert X_{k} \vert\geq\alpha n$, there exists an edge  $\left\{ x_{k-1},x_{k} \right\}$
	between  $Z_{k-1}$ and $ X_{k}$.
	Therefore, we can find a transversal path $x_{1}x_{2}\dots x_{k-1}x_{k}$, where $x_{s}\in X_{s}$ for $s\in [k]$.
\end{proof}

Now we are ready to prove Lemma  \ref{4}.

\begin{proof}[ Proof of Lemma \ref{4}]
	Given $k\in 4\mathbb{N}$ and  $\delta,\zeta$ with $\delta>\frac{2}{k}$, we set $\eta:=\delta-\frac{2}{k}$ and choose $\frac{1}{n}\ll\alpha\ll\frac{1}{N_{0}}\ll\varepsilon\ll \zeta, \delta,\eta$. 
	Let $G=(V_{1},\ldots,V_{k},E)$ be  a spanning subgraph of the $n$-blow-up of $C_{k}$ with $\bar{\delta}(G)\geq \delta n$ and $\alpha^*_{\rm b}(G)<\alpha n$. 
	By applying Lemma \ref{19} on $G$ with $d:=\frac{\eta}{4}$, we obtain a partition $\mathcal{P}=\left\{ U_{0} \right\}\cup\left\{ U_{i,j}\subseteq V_{i}: i\in [k], j\in [N_{0}] \right\}$ that refines the partition $\{V_1,V_2,\dots,V_k\}$ of $G$ and a spanning subgraph $G^{\prime}$ of $G$ with properties $(a)-(e)$, where we write $m:=\vert U_{i,j} \vert$ for all $i\in [k]$,  $j\in [N_{0}]$.
	Let $R_{d}$ be the reduced graph defined on the vertex set $\left\{ U_{i,j}: i\in [k], j\in [N_{0}] \right\}$.
	For each $i\in[k]$, let $\mathcal{V}_{i}=\left\{ U_{i,j}: j\in [N_{0}] \right\}$ and note that $\{\mathcal{V}_i:i\in[k]\}$ is a partition of $R_d$.
	Then Fact \ref{20} implies that $\bar{\delta}(R_{d})\geq(\delta-\frac{\eta}{2})N_{0}=(\frac{2}{k}+\frac{\eta}{2})N_{0}$.     
	
	To obtain an almost perfect transversal $C_{k}$-tiling in $G$, we  define an auxiliary graph $H$	with $k$ vertices and two disjoint edges and then use it for embedding  copies of transversal $C_k$ (see Claim \ref{28}). 
	Now we will show that there exist $N_{0}$ vertex-disjoint copies of specific $H$  in $R_{d}$.

	\begin{claim}\label{105}
		For every $i\in [k-1]$,	$R_{d}[\mathcal{V}_{i},\mathcal{V}_{i+1}]$ has a matching of size  $\min\{N_0, 2\bar\delta(R_{d})\}$. 
	\end{claim}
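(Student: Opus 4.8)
The plan is to discard the global structure of $R_d$ and work only with the bipartite graph $B:=R_d[\mathcal{V}_i,\mathcal{V}_{i+1}]$. Since $\{i,i+1\}$ is an edge of $C_k$ for every $i\in[k-1]$, the definition of $\bar\delta$ tells us that $B$ has both sides of size $N_0$ and $\delta(B)\ge\bar\delta(R_d)=:d_0$. So it suffices to establish the following purely combinatorial fact: every bipartite graph with parts of equal size $N$ and minimum degree at least $d$ contains a matching of size $\min\{N,2d\}$. Applying it with $N=N_0$ and $d=d_0$, and using the trivial bound $d_0\le N_0$, gives the claim.

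To prove this fact I would argue by contradiction via K\"onig's theorem. Suppose a maximum matching of $B$ has size $\nu<\min\{N_0,2d_0\}$. By K\"onig's theorem there is a vertex cover $C$ with $|C|=\nu$; write $C=C_A\cup C_B$ with $C_A\subseteq\mathcal{V}_i$ and $C_B\subseteq\mathcal{V}_{i+1}$, so that $|C_A|+|C_B|=\nu<N_0$. In particular both $\mathcal{V}_i\setminus C_A$ and $\mathcal{V}_{i+1}\setminus C_B$ are non-empty. Pick $a\in\mathcal{V}_i\setminus C_A$: since $C$ covers every edge of $B$ and $a\notin C$, every neighbour of $a$ lies in $C_B$, whence $|C_B|\ge\deg_B(a)\ge d_0$. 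Symmetrically, a vertex $b\in\mathcal{V}_{i+1}\setminus C_B$ forces $|C_A|\ge d_0$. Therefore $\nu=|C_A|+|C_B|\ge 2d_0$, contradicting $\nu<2d_0$. Hence every maximum matching of $B$ has size at least $\min\{N_0,2d_0\}$, and together with the trivial upper bound $\nu\le N_0$ we conclude that $B$ contains a matching of size exactly $\min\{N_0,2d_0\}\ge\min\{N_0,2\bar\delta(R_d)\}$.

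There is essentially no real obstacle here. The only thing to watch is the case split concealed in the $\min$, which the K\"onig argument handles uniformly: when $2d_0\ge N_0$ the output is a perfect matching between $\mathcal{V}_i$ and $\mathcal{V}_{i+1}$, and otherwise a matching saturating $2d_0$ vertices on each side. (A greedy augmenting-path argument or defect Hall would also work, but the K\"onig route is the shortest.)
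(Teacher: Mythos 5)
Your proof is correct and follows essentially the same route as the paper: argue by contradiction via K\"onig's theorem applied to the bipartite graph $R_d[\mathcal{V}_i,\mathcal{V}_{i+1}]$, then look at a vertex outside the cover to force the cover to be large. The paper's version uses a pigeonhole step to pick one small side of the cover and examines a single uncovered vertex, whereas you pick uncovered vertices on both sides and bound $|C_A|$ and $|C_B|$ symmetrically --- a marginally cleaner way of extracting the same contradiction.
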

	
	\begin{proof}
		
		Let $m=\min\{N_0, 2\bar\delta(R_{d})\}$ and
		without loss of generality, we may take $i=1$ for instance. Let $M$ be a maximum matching in  $R_{d}[\mathcal{V}_{1},\mathcal{V}_{2}]$  and
		assume for the contrary that
		$\vert E(M)\vert\leq m-1$. Let $U$ be the minimum vertex cover of    
		$R_{d}[\mathcal{V}_{1},\mathcal{V}_{2}]$.
		Then by K\"onig's theorem  (\cite{lovasz1986plummer}),  it holds that 
		$\vert U \vert=\vert E(M) \vert$.
		We write $A=U\cap\mathcal{V}_{1}$ and $B=U\cap\mathcal{V}_{2}$.
		By the pigeonhole principle, 
		we  get $\vert A\vert\leq\lfloor\frac{m-1}{2}\rfloor $ or $\vert B\vert\leq\lfloor\frac{m-1}{2}\rfloor $. 
		Suppose $\vert A\vert\leq\lfloor\frac{m-1}{2}\rfloor $.
		Since $m\leq N_0$, we have $\vert\mathcal{V}_{2}\backslash B \vert\neq0$.	Arbitrarily choose $u\in \mathcal{V}_{2}\backslash B$,
		then $u$ has no neighbor in $\mathcal{V}_{1}\backslash A$.
		Thus we have	$d_{\mathcal{V}_{1}}(u)=d_{A}(u)\leq\lfloor\frac{m-1}{2}\rfloor\leq \lfloor\frac{2\bar\delta(R_{d})-1}{2}\rfloor=  \bar{\delta}(R_{d})-1$, a contradiction.

	\end{proof}
	By Claim \ref{105}, for each  $i\in [\frac{k}{2}]$, there exists a matching $M_{i}$ of size $\min\{N_0, 2\bar\delta(R_{d})\}$ between $\mathcal{V}_{2i-1}$ and $\mathcal{V}_{2i}$.
	Let $A_{j}\subseteq\mathcal{V}_{j}$ be the vertices uncovered by matchings for $j\in[k]$.
	We pick a family $\mathcal{H}$ of $N_{0}$	vertex-disjoint copies of $H$ such that each copy contains two disjoint edges $e_{1}$ and $e_{2}$ where $e_{1}\in M_{2i-1}$	and $e_{2}\in M_{2i}$ for some $i\in [\frac{k}{4}]$ and exactly one vertex inside each $A_{j}$ for $j\in [k]\backslash \left\{4i-3,4i-2, 4i-1,4i\right\}$.
	Since
	\begin{equation}
		\sum_{i=1}^{\frac{k}{4}}\vert M_{2i-1} \vert=	\frac{k}{4}\cdot \min\{N_0, 2\bar\delta(R_{d})\} \geq N_{0},\nonumber
	\end{equation}
	and similarly $\sum_{i=1}^{k/4} \vert M_{2i}\vert\ge N_0$,
 we can greedily find $N_{0}$ vertex-disjoint copies of $H$  that together cover all vertices in $R_{d}$.

	\begin{claim}\label{28}
		For each copy of $H$ in $\mathcal{H}$, we can find a transversal $C_{k}$-tiling covering all but at most  $\frac{\zeta m}{2} $ vertices in the union of its clusters in $G$.
	\end{claim}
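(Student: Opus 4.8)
The plan is to build the required transversal $C_k$-tiling greedily, peeling off one transversal copy of $C_k$ at a time, and to exploit the two edges of $H$ — which correspond to $(\varepsilon,\tfrac{\eta}{4})$-regular pairs of clusters — to control the ``closing-up'' of each such cycle.

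First I would fix notation for the given copy of $H\in\mathcal H$. It uses one cluster from each of $\mathcal V_1,\dots,\mathcal V_k$, and its two edges join clusters lying in four \emph{consecutive} parts, namely $\mathcal V_{4i-3},\mathcal V_{4i-2}$ and $\mathcal V_{4i-1},\mathcal V_{4i}$ for some $i$. Rotating the cyclic labelling of $C_k$ (and correspondingly of the parts of $G$), I may relabel these $k$ clusters as $W_1,\dots,W_k$ so that $(W_{k-2},W_{k-1})$ and $(W_k,W_1)$ are exactly the two $(\varepsilon,\tfrac{\eta}{4})$-regular pairs, while for every $j\in[k-1]$ the pair $(W_j,W_{j+1})$, and also $(W_k,W_1)$, consists of clusters lying in consecutive parts of $G$; here the ``free'' part-edge $W_{k-1}W_k$ is the one sitting between the two $H$-edges. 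Since $\alpha^*_{\rm b}(G)<\alpha n$ and $\alpha\ll 1/N_0$ forces $\alpha n<m$ (recall $m=|U_{i,j}|$ and $k$ is a fixed constant), any two subsets of size at least $\alpha n$ lying in consecutive $W_j$'s are joined by an edge of $G$.

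The core step is the following. Suppose after $t$ cycles have been removed the leftover sets $W_1^t\subseteq W_1,\dots,W_k^t\subseteq W_k$ all have common size $s:=m-t\ge 2\varepsilon m$. Applying Fact~\ref{22} to the regular pair $(W_k,W_1)$ with $B:=W_k^t$ (of size $s\ge\varepsilon m$), all but at most $\varepsilon m$ vertices of $W_1$ have at least $(\tfrac{\eta}{4}-\varepsilon)s\ge\alpha n$ neighbours in $W_k^t$; let $W_1^{\mathrm g}\subseteq W_1^t$ consist of those vertices lying in $W_1^t$, so $|W_1^{\mathrm g}|\ge s-\varepsilon m\ge\varepsilon m\ge\alpha n$. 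Symmetrically, using $(W_{k-2},W_{k-1})$ with $B:=W_{k-1}^t$, I obtain $W_{k-2}^{\mathrm g}\subseteq W_{k-2}^t$ with $|W_{k-2}^{\mathrm g}|\ge\varepsilon m$ such that each of its vertices has at least $\alpha n$ neighbours in $W_{k-1}^t$. Now apply Proposition~\ref{90} along the $k-2$ consecutive parts containing $W_1,\dots,W_{k-2}$, with the sets $W_1^{\mathrm g},W_2^t,\dots,W_{k-3}^t,W_{k-2}^{\mathrm g}$ (the two end sets of size $\ge\alpha n$, the interior ones of size $s\ge 2\alpha n$): this produces a transversal path $u_1u_2\cdots u_{k-2}$ with $u_1\in W_1^{\mathrm g}$ and $u_{k-2}\in W_{k-2}^{\mathrm g}$. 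Then $N(u_1)\cap W_k^t$ and $N(u_{k-2})\cap W_{k-1}^t$ each have size $\ge\alpha n$, so by the bipartite-hole condition on the consecutive clusters $W_{k-1},W_k$ there is an edge $u_{k-1}u_k$ of $G$ with $u_{k-1}\in N(u_{k-2})\cap W_{k-1}^t$ and $u_k\in N(u_1)\cap W_k^t$. Consequently $u_1u_2\cdots u_{k-1}u_ku_1$ is a transversal copy of $C_k$ inside $\bigcup_j W_j^t$; I delete its vertices and iterate. The process stops once $s<2\varepsilon m$, at which point the number of uncovered vertices is $ks<2k\varepsilon m\le\tfrac{\zeta m}{2}$ by $\varepsilon\ll\zeta$ and $k$ fixed. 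Since the regular pairs come from $G'\subseteq G$, every edge used lies in $G$, so this yields the claimed tiling.

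I expect the closing-up to be the only real difficulty: after routing most of the cycle through Proposition~\ref{90}, one must choose the last two vertices $u_{k-1},u_k$ so that they are adjacent to each other and to the already-fixed endpoints $u_1,u_{k-2}$ of that path. A single regular pair would pin down only one of the two ``loose ends'', which is precisely why $H$ is taken with two disjoint edges, arranged so that each regular pair sits at one loose end, with the single remaining part-edge $W_{k-1}W_k$ dealt with by the $\alpha^*_{\rm b}$-condition. The remaining estimates — $(\tfrac{\eta}{4}-\varepsilon)s\ge\alpha n$, $\varepsilon m\ge\alpha n$, and $2k\varepsilon m\le\tfrac{\zeta m}{2}$ — are routine consequences of the hierarchy $\alpha\ll 1/N_0\ll\varepsilon\ll\zeta,\delta,\eta$ together with $m$ being of order $n/N_0$ and $k$ fixed.
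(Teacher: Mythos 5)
Your proof is correct and takes essentially the same approach as the paper: both use Fact~\ref{22} on the two regular pairs of $H$ to restrict to vertices with many neighbours across those pairs, Proposition~\ref{90} to route the long transversal path, and the bipartite-hole condition to supply the single remaining ``middle'' edge, then peel off one transversal $C_k$ at a time. The only difference is a cosmetic reordering: the paper picks the middle edge first via $\alpha^*_{\rm b}$ between the Fact~\ref{22}-restricted sets and then applies Proposition~\ref{90} to the resulting neighbourhoods, whereas you restrict the end-sets first, route the path, and close the cycle last.
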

	
	\begin{proof}[Proof of Claim \ref{28}]
		Given a copy of $H$ ,
		without loss of generality, we may assume that $V(H)=\left\{U_{1,1},  U_{2,2},\ldots,U_{k,k}\right\}$ and $\left\{U_{1,1}, U_{2,2}\right\}$, $\left\{U_{3,3}, U_{4,4}\right\} \in E(H)$. 
		Therefore $(U_{1,1},U_{2,2})$ and $(U_{3,3},U_{4,4})$ are $(\varepsilon,d)$-regular in $G^\prime$. 
		Now	it suffices to show that for any $Z_{i}\subseteq U_{i,i}$ with $i\in[k]$	each of  size at least $\frac{\zeta m}{2k} $, there exists a copy of $C_{k}$ with exactly one vertex  inside each $Z_{i}$.
		
		Since $\vert Z_{i} \vert\geq \frac{\zeta m}{2k} \ge \varepsilon m$ for $i\in [4]$, Fact $\ref{22}$ implies that there exists a subset $Z_{2}^{\prime}\subseteq Z_{2}$ of size at least $\vert Z_{2} \vert-\varepsilon m$ such that every vertex in $Z_{2}^{\prime}$ has at least $(d-\varepsilon)\vert Z_{1} \vert$ neighbors inside $Z_{1}$ and a subset $Z_{3}^{\prime}\subseteq Z_{3}$ of size at least $\vert Z_{3} \vert-\varepsilon m$ such that every vertex in $Z_{3}^{\prime}$ has at least $(d-\varepsilon)\vert Z_{4} \vert$ neighbors inside $Z_{4}$ respectively.

		By the assumption $\alpha^*_{\rm b}(G)<\alpha n$ and the fact that $\vert Z_2^\prime\vert, \vert Z_3^\prime\vert\ge\frac{\zeta m}{2k}-\varepsilon m \ge\alpha n$, there is at least one edge between $Z_{2}^{\prime}$ and $Z_{3}^{\prime}$.
		Arbitrarily choose one edge $\left\{u,v\right\}$ with $u\in Z_2'$ and $v\in Z_3'$, 
		and let $Q_{1}=N(u)\cap Z_{1}$ and $Q_{2}=N(v)\cap Z_{4}$ respectively. 
		Then we have $\vert Q_{1} \vert,\vert Q_{2} \vert\geq
		(d-\varepsilon)\cdot\frac{\zeta m}{2k}\geq \alpha n$ and  note that $\vert Z_i\vert\ge\frac{\zeta m}{2k}\ge 2\alpha n$ for every $i\in [k]$.
		By applying Proposition \ref{90} with $X_{i}:=Q_{1}$ and $X_{j}:=Q_{2}$, we can obtain a transversal path of length $k-3$, where the ends in $Q_{1}$ and $Q_{2}$ are denoted by $u^{\prime}$ and $v^{\prime}$ respectively. Together with three edges $\left\{u^{\prime},u\right\}$, $\left\{u,v\right\}$, $\left\{v,v^{\prime}\right\}$, we can  construct a copy of transversal $C_{k}$ in $\cup_{i=1}^{k}Z_{i}$. 
		Thus we can obtain a transversal $C_{k}$-tiling covering all but at most $\frac{\zeta m}{2}$	vertices in $\cup_{i=1}^{k} U_{i,i}$ in $G$. 
	\end{proof} 
	
	This would finish the proof as  the union of these $C_{k}$-tilings taken over all copies of $H$ in $\mathcal{H}$ would leave at most
	\begin{equation}
		\vert U_{0}\vert+\vert \mathcal{H} \vert \cdot	\frac{\zeta m}{2}
		\leq \varepsilon n+\frac{\zeta n }{2}\leq\zeta n \nonumber
	\end{equation}
	vertices uncovered.
\end{proof}

\subsection{Building an absorbing set}\label{77}

A typical step in the absorption method for $F$-factor is to show that	every $k:=\vert V(F) \vert$-set  has polynomially many absorbers (see \cite{2203.02169}). 
However, it remains unclear  whether this property holds in our setting.
Instead, a new approach due to Nenadov and Pehova \cite{nenadov2020ramsey} guarantees an absorbing set provided
that  every $k$-set  has linearly many vertex-disjoint absorbers.
Since the host graph  in Lemma \ref{3} is  $k$-partite, we aim to show that  every transversal $k$-set has linearly many vertex-disjoint absorbers. 
For this, we shall make use of the \textit{lattice-based absorbing method} developed by Han \cite{Han2017Decision}.

\subsubsection{ Finding absorbers}
To illustrate the lattice-based absorbing method, we introduce some definitions.
Let  $G, F$ be given as aforementioned and $m, t$ be positive integers.
Then we say that two vertices $u,v\in V(G)$ are $(F, m, t)$-\textit{reachable} (in $G$) if for any set $W$ of $m$ vertices, there is a set $S\subseteq V(G)\backslash W$ of size at most $kt-1$ such that both $G[\left\{u \right\}\cup S]$ and $G[\left\{v \right\}\cup S]$ have $F$-factors, where we call such $S$ an $F$-$\textit{connector}$ for $u, v$. Moreover, a set $U\subseteq V(G)$ is $(F, m, t)$-$\textit{closed}$ if every two vertices $u, v$ in $U$ are $(F, m, t)$-reachable, where the corresponding $F$-connector for $u, v$ may not be included in $U$.

The following result builds a sufficient condition to ensure that every transversal $k$-set has linearly many vertex-disjoint absorbers.
\begin{lemma}\label{5}
	Given  $k \in \mathbb{N}$ with $k\geq4$  and a constant $\delta>\frac{2}{k}$,
	there exist $\alpha,\beta>0 $ such that the following holds for sufficiently large $n \in \mathbb{N} $.
	Let $G=(V_{1},\ldots,V_{k},E)$ be  a spanning subgraph of the n-blow-up of $C_{k}$ with $\bar{\delta}(G)\geq \delta n$ and $\alpha^*_{\rm b}(G)<\alpha n$. 
	Then every transversal $k$-set in $G$ has at least $ \frac{\beta n-k}{4k^2}$
	vertex-disjoint $(C_{k},2k)$-absorbers. 
\end{lemma}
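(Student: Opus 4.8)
The plan is to prove Lemma~\ref{5} via the lattice-based absorption method of Han~\cite{Han2017Decision}, working with transversal copies of $C_{k}$ throughout. Fix a small constant $\beta$ with $\alpha\ll\beta\ll\delta-\frac{2}{k},\frac{1}{k}$. The heart of the matter is a \emph{reachability claim}: there is an absolute constant $t_{0}=t_{0}(k)$ such that, after deleting at most $o(n)$ ``exceptional'' vertices from each part, every remaining vertex of $V_{i}$ is $(C_{k},\beta n,t_{0})$-reachable to all but $o(n)$ of the vertices of $V_{i}$. To prove this I would analyse the ``one-step reachability graph'' $R_{i}$ on $V_{i}$, where $u$ and $v$ are joined whenever they have more than $2\beta n$ common neighbours in each of $V_{i-1}$ and $V_{i+1}$. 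For such a pair, after deleting any prescribed set of $\beta n$ vertices one can still fix a common neighbour $y\in V_{i+1}$ and a common neighbour $z\in V_{i-1}$, and then apply Proposition~\ref{90} to join $y$ to $z$ by a transversal path through $V_{i+2},\dots,V_{i-2}$; this yields a single $(k-1)$-vertex ``frame'' $S$ that completes \emph{both} $u$ and $v$ to a transversal copy of $C_{k}$, so adjacent vertices of $R_{i}$ are $(C_{k},\beta n,1)$-reachable, and by concatenating such frames (using intermediate hub vertices) vertices at $R_{i}$-distance $\ell$ are $(C_{k},\beta n,\ell)$-reachable. The crucial structural input is the $\alpha^{*}_{\rm b}$-condition: it forces every subset of $V_{i}$ of size $\alpha n$ to have at least $n-\alpha n$ neighbours in each of $V_{i-1}$ and $V_{i+1}$, and this, together with $\bar{\delta}(G)>\frac{2n}{k}$, is what one uses to bound the number of components and the diameter of $R_{i}$ (note $t_{0}$ is essentially this diameter).

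Granting the reachability claim, I would build an absorber for a given transversal $k$-set $S=\{v_{1},\dots,v_{k}\}$ with $v_{i}\in V_{i}$ as follows. First fix a transversal copy of $C_{k}$, say $C^{(0)}=w_{1}w_{2}\cdots w_{k}$ with $w_{i}\in V_{i}$, disjoint from $S$ and chosen so that each $w_{i}$ lies in the ``good'' part of $V_{i}$ and is $(C_{k},\beta n,t_{0})$-reachable to $v_{i}$; such a copy can be located by Proposition~\ref{90} since the good parts have size $(1-o(1))n$ and transversal $C_{k}$'s are abundant. For each $i\in[k]$ pick a $C_{k}$-connector $S_{i}$ for the pair $(w_{i},v_{i})$ with $|S_{i}|\le kt_{0}-1$, taking $S_{1},\dots,S_{k}$ pairwise disjoint and disjoint from $V(C^{(0)})\cup S$; this is possible because at every step the set of already-used vertices has size $o(n)\ll\beta n$. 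Set $A_{S}:=V(C^{(0)})\cup\bigcup_{i=1}^{k}S_{i}$, so that $|A_{S}|\le k+k(kt_{0}-1)=k^{2}t_{0}\le 2k^{2}$ as soon as $t_{0}\le 2$. Then $G[A_{S}]$ has a transversal $C_{k}$-factor --- the union over $i\in[k]$ of the transversal $C_{k}$-factors of $G[\{w_{i}\}\cup S_{i}]$ --- and $G[A_{S}\cup S]$ has one as well, namely $C^{(0)}$ together with the union over $i\in[k]$ of the transversal $C_{k}$-factors of $G[\{v_{i}\}\cup S_{i}]$. Hence $A_{S}$ is a $(C_{k},2k)$-absorber of $S$.

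Finally, the required number of vertex-disjoint absorbers is obtained by a greedy construction: each absorber uses at most $2k^{2}$ vertices, so while fewer than $\frac{\beta n-k}{4k^{2}}$ of them have been built, at most $\frac{\beta n-k}{2}$ vertices have been used, and since $\alpha\ll\beta$ the removal of this $o(n)$-sized set destroys neither the reachability structure of the $V_{i}$ nor the supply of transversal $C_{k}$'s and transversal paths furnished by Proposition~\ref{90}; so one can always produce one more absorber disjoint from all previous ones.

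I expect the reachability claim --- and in particular verifying that the closedness parameter $t_{0}$ is small enough that $|A_{S}|\le 2k^{2}$ (say, $t_{0}\le 2$) --- to be the main obstacle. The difficulty is that the $\alpha^{*}_{\rm b}$-condition does not bound codegrees: an individual vertex may have only $\delta n$ neighbours in $V_{i+1}$, so two fixed vertices of $V_{i}$ need not share many common neighbours, and the crude count of bipartite holes is not by itself enough to control $R_{i}$. The argument therefore has to combine the ``no large bipartite hole'' expansion with the degree hypothesis $\bar{\delta}(G)>\frac{2n}{k}$ in an essential way --- very likely together with an averaging over the $k$ cyclically consecutive parts $V_{1},\dots,V_{k}$ --- in order to pin down the component and distance structure of the graphs $R_{i}$, and hence the value of $t_{0}$.
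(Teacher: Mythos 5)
Your high-level framework — establish that each $V_i$ is $(C_k,\beta n, t_0)$-closed for a small $t_0$, then build an absorber for a transversal $k$-set $S$ by anchoring on a fresh transversal copy $C^{(0)}$ of $C_k$ and joining it to $S$ coordinatewise by $C_k$-connectors, then run a greedy disjointness argument — matches the paper exactly, and your derivation of the $C_k$-factors in $G[A_S]$ and $G[A_S\cup S]$ from the connectors is correct. The issue is that you have not actually proved the closedness claim, and you correctly flag this yourself: the hypotheses do \emph{not} control codegrees, so the ``one-step reachability graph'' $R_i$ built from common neighbourhoods of size $\ge 2\beta n$ in $V_{i-1}$ and $V_{i+1}$ may be empty — two vertices $u,v\in V_i$ with $N_{V_{i+1}}(u)\cap N_{V_{i+1}}(v)=\emptyset$ are perfectly consistent with $\bar\delta(G)\ge\delta n$ and $\alpha^*_{\mathrm b}(G)<\alpha n$ when $\delta<1/2$. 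No averaging over the $k$ parts repairs this, because the codegree deficit can occur uniformly across all consecutive pairs. So the diameter/component bound on $R_i$, and hence the value of $t_0$, is genuinely missing and your proposal does not close the lemma.

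The paper resolves this by \emph{not} working with common neighbours at all. Given $u,v\in V_1$, use $\bar\delta(G)\ge\delta n$ to fix four pairwise disjoint sets $D_1\subseteq N_{V_2}(u)$, $D_2\subseteq N_{V_2}(v)$, $D_3\subseteq N_{V_k}(u)$, $D_4\subseteq N_{V_k}(v)$, each of size $\ge\delta n/2$. After removing a prescribed $W$ with $|W|\le\beta n$, each $D_j'=D_j\setminus W$ still has size $\ge\alpha n$. By the no-bipartite-hole condition, for each $j$ all but fewer than $\alpha n$ vertices of $V_1\setminus W$ have a neighbour in $D_j'$; intersecting over $j\in[4]$ leaves a nonempty set, so there is a ``switching vertex'' $x\in V_1\setminus(W\cup\{u,v\})$ with neighbours $y_j\in D_j'$ for all $j$. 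Note that $x$ need not be a common neighbour of $u$ and $v$ (indeed it lies in their own part), which is exactly what sidesteps the codegree obstruction. One then builds, via Proposition~\ref{90}, a transversal $C_k$-copy $C_1$ through $u,y_1,y_3$ and a disjoint one $C_2$ through $v,y_2,y_4$, and checks that $\{x\}\cup(V(C_1)\setminus\{u\})\cup(V(C_2)\setminus\{v\})$ is a $C_k$-connector for $u,v$ of size $2k-1$; both cycle-factors exist because $x$ is adjacent to both endpoints of each of the two paths $V(C_j)\setminus\{u,v\}$. This gives $(C_k,\beta n,2)$-closedness outright, exactly the $t_0=2$ your sizing requires, with no auxiliary reachability graph and no diameter argument. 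If you want to rescue your write-up, replace the $R_i$-based reachability claim by this switching-vertex construction; the rest of your argument then goes through essentially verbatim.
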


\subsubsection{Proof of Lemma \ref{3}} \label{13}
In order to prove the existence of an absorbing set,
we 
introduce a  notion of $F$-fan.

\begin{definition}
	{\rm(\cite{han2021ramsey})
		For a vertex $v\in V(G)$ and a $k$-vertex graph $F$, an \textit{$F$-fan} $\mathcal{F}_{v}$ at $v$
		in $V(G)$ is a collection of pairwise disjoint sets $S\subseteq  V(G)\backslash \left\{v \right\}$ such that for each $S\in \mathcal{F}_{v}$
		we have that $\vert S\vert=k-1$ and $\left\{v \right\}\cup S$ spans a copy of $F$.
		
	}
\end{definition}

To build an absorbing structure, we shall make use of bipartite templates as follows, which was introduced by Montgomery \cite{montgomery2019spanning}.

\begin{lemma}\label{31}
	Let $\beta>0$. There exists $m_{0}$ such that the following holds for every $m\geq m_{0}$.
	There exists a bipartite graph $B_{m}$ with vertex classes $X_{m}\cup Y_{m} $ and $Z_{m}$ and maximum degree 40, such that $\vert X_{m}\vert=m+\beta m$, $\vert Y_{m}\vert=2m$ and $\vert Z_{m}\vert=3m$, and for every subset $X^{\prime}_{m}\subseteq X_{m}$ of size $\vert X^{\prime}_{m}\vert=m$, the induced graph $B[X^{\prime}_{m}\cup Y_{m},Z_{m}]$ contains a perfect matching.
\end{lemma}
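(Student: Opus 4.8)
The plan is to follow Montgomery's construction~\cite{montgomery2019spanning} of a robustly matchable bipartite graph. Since the property to be established concerns only perfect matchings, the first step is to recast it via Hall's theorem. Fix $X'_m \subseteq X_m$ with $|X'_m| = m$ and set $W := X_m \setminus X'_m$, so $|W| = \beta m$. Because $|X'_m \cup Y_m| = 3m = |Z_m|$, it suffices to arrange that
\[
|N_{B_m}(S) \cap (X'_m \cup Y_m)| \ge |S| \quad \text{for every } S \subseteq Z_m,
\]
and this must hold simultaneously for every choice of $X'_m$; equivalently, $B_m$ must admit, for each $\beta m$-subset $W$ of $X_m$, a matching saturating $Z_m$ and avoiding $W$.

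For the construction I would assemble $B_m$ from two kinds of ingredients. First, a union of explicit perfect matchings that pin most of $Z_m$ down to $Y_m$: after splitting $Z_m$ into three equal parts and $Y_m$ into two, one arranges in particular that every vertex of $Z_m$ has a neighbor in $Y_m$, a ``fallback'' partner that the adversary can never delete. Second, a bounded-degree bipartite expander $H$ between $X_m$ and $Z_m$, chosen so that $|N_H(T)| \ge (1 + 2\beta)|T|$ for every not-too-large set $T$ on either side; such an $H$ exists by a standard probabilistic argument (for instance a truncated random bipartite graph of bounded average degree), or explicitly via Ramanujan-type constructions. Gluing these pieces together yields a graph of bounded maximum degree, and Montgomery's bookkeeping gives the bound $40$ of Lemma~\ref{31}.

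Verifying Hall's condition then proceeds by the usual casework on the size of $S \subseteq Z_m$. When $S$ is small, one uses the fallback neighbors in $Y_m$ together with the expansion of $H$: deleting $W$ removes at most $\beta m$ vertices from any neighborhood, and the expansion factor $1 + 2\beta$ absorbs this loss, so an augmenting-path (or direct Hall) argument gives $|N_{B_m}(S) \cap (X'_m \cup Y_m)| \ge |S|$. When $S$ is large, $S$ meets the ``matched'' portion of $Z_m$ in many vertices whose $Y_m$-partners are untouched by the deletion, and combining this with a controlled contribution through $H$ and the bound $|Y_m| = 2m$ closes the gap by a short count. Taking the two regimes together produces the desired perfect matching for every $X'_m$, which proves the lemma.

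The main obstacle is the tension between the two demands in the statement: maximum degree bounded by an absolute constant, and robustness against deleting an arbitrary linear-sized subset $W \subseteq X_m$. A naive construction that simply places a bounded-degree expander directly between $X_m$ and $Z_m$ cannot work, since robustness would force some vertex of $Z_m$ to have degree at least $\beta m + 1$. Montgomery's device, on which the plan relies, is precisely the reservoir $Y_m$ of size $2m$: it provides enough slack for matchings to be rerouted around $W$ while all degrees stay constant. Pinning down the exact gadget, and tuning the expander parameters so that both the small-$S$ and large-$S$ cases go through with a single universal degree bound, is the delicate technical core of the argument.
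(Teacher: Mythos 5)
The paper does not prove Lemma~\ref{31} at all; it is quoted verbatim from Montgomery~\cite{montgomery2019spanning} and used as a black box in the proof of Lemma~\ref{3}. So there is no in-paper argument to compare yours against, and the correct benchmark is Montgomery's own construction.

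Your high-level plan---build a bounded-degree bipartite gadget and certify robust matchability via Hall's theorem, with $Y_m$ as the reservoir that absorbs the adversarial deletion $W\subseteq X_m$---is the right shape. But the specific assembly (explicit matchings into $Y_m$ ``so that every vertex of $Z_m$ has a fallback partner'', plus one bounded-degree expander $H$ into $X_m$) does not close, and the gap sits exactly where the casework on $|S|$ is supposed to mesh. First, since $|Z_m|=3m>2m=|Y_m|$, the fallback map cannot be injective, so for $|S|\ge 2$ the claim ``each element of $S$ has an untouched $Y_m$-neighbor'' does not yield $|N(S)\cap Y_m|\ge |S|$: two elements of $S$ can share a fallback. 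What you actually need is that the $Z_m$--$Y_m$ bipartite subgraph is itself an expander at all scales where $X'_m$ cannot help, which a handful of explicit matchings between aligned blocks does not give (one can choose $S$ meeting all three $Z$-blocks so that the partners collide and $|N(S)\cap Y_m|\approx \frac{2}{3}|S|$). Second, the expansion estimate $|N_H(S)\cap X_m|\ge(1+2\beta)|S|$ survives removal of $W$ (of size $\beta m$) only when $2\beta|S|\ge \beta m$, i.e.\ $|S|\ge m/2$; so for small and medium $S$ the $X$-side contributes nothing robustly, precisely the regime where the $Y$-side shortfall above occurs. Third, for $|S|$ near $2m$ and above, $Y_m$ alone saturates at $|Y_m|=2m<|S|$, so the $X$-side is essential, and here one must union-bound over exponentially many $S$; this forces a genuine probabilistic (or cleverly structured) choice of the whole gadget with a careful degree/concentration trade-off, not a deterministic splice of matchings and an expander. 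In short, the two regimes of your case analysis each lean on the piece that fails in that regime, and the overlap is not covered. To repair the plan you essentially have to replace ``explicit matchings $+$ one expander'' by the kind of globally randomized (or carefully layered) bounded-degree bipartite graph that Montgomery actually constructs, and run the Hall verification with the union bound done quantitatively so that the degree bound~$40$ falls out; as written, the proposal identifies the correct obstacles but does not yet overcome them.
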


\begin{proof}[Proof of Lemma \ref{3} ]
	Given $k \in \mathbb{N}$ with $k\geq4$ and positive constants $\delta>\frac{2}{k}$ and $\gamma\leq\frac{\delta}{2}$, we choose $\frac{1}{n}\ll\alpha\ll\xi\ll\beta\ll\delta,\gamma, \frac{1}{k}$.
	Let $G=(V_{1},\ldots,V_{k},E)$ be a spanning subgraph of the $n$-blow-up of $C_{k}$ with $\bar{\delta}(G)\geq \delta n$ and $\alpha^*_{\rm b}(G)<\alpha n$.  
	Lemma \ref{5}  implies that  every transversal $k$-set  in $G$ has at least $\frac{\beta n-k}{4k^2}$ vertex-disjoint $(C_{k},2k) $-absorbers. 
	Let $\tau:=\frac{\beta}{8k^2}$.
	Then for every $v\in V(G)$, there is a $C_k$-fan $\mathcal{F}_{v}$ in $V(G)$ of size at least $ \frac{\beta n-k}{4k^2}\ge\tau n $.
	Now it suffices to find a $\xi$-absorbing set $R$ for some $\xi >0$ such that $\vert R \vert\leq\tau n\leq\gamma n$.

	Let $q=\frac{\tau}{1000k^3}$ and $\beta^{\prime}=\frac{q^{k-1}\tau}{2k}$.
	For $i\in[k]$, let $X_{i}\subseteq  V_{i} $ be a set of size $qn$ chosen uniformly at random.
	For every $v\in V(G)$, let $f_{v}$ denote the number of the sets from $\mathcal{F}_{v}$ that lie inside $\cup_{i=1}^{k} X_{i}$. Note that $\mu:=\mathbb{E}[f_{v}]= q^{k-1}\vert\mathcal{F}_{v} \vert\geq q^{k-1}\tau n$. By the union bound and Chernoff's inequality, we have
	\begin{equation}
		\mathbb{P}\left[\,{\rm there}\ {\rm is}\ v\in V(G)\ {\rm with}\ f_{v}<\frac{\mu}{2}\,\right]\leq kn\, {\rm exp}\left(\frac{-(\mu/2)^2}{2\mu}\right)\leq kn\, {\rm exp}\left(-\frac{q^{k-1}\tau}{8}n\right)=o(1). \nonumber
	\end{equation}
	Therefore, as $n$ is sufficiently large, there exist $X_{i}\subseteq V_{i} $ with  $\vert X_{i} \vert= qn$	such that for each $v\in  V(G)$, there	is a subfamily $\mathcal{F}_{v}^{\prime}$ of at least $\frac{q^{k-1}\tau n}{2}= k\beta^{\prime}n$ sets from $\mathcal{F}_{v}$ contained  in $\cup_{i=1}^{k} X_{i}$.

	Let $m=\vert X_{i} \vert/(1+\beta^{\prime})$ and note that $m$ is linear in $n$. 
	Let $\left\{ I_{i}  \right\}_{i\in [k]}$ be a partition of $[3km]$ with each $\vert I_{i} \vert=3m$.
	For $i\in [k]$,	arbitrarily choose $k$ vertex-disjoint subsets $Y_{i}$,  $ Z_{i,j}$ for $j\in[k]\backslash \left\{ i\right\}$ in $V_{i}\backslash X_{i}$	with $\vert Y_{i}\vert=2m$ and $\vert Z_{i,j}\vert=3m$.
	Let $X=\cup_{i=1}^{k} X_{i}$, $Y=\cup_{i=1}^{k} Y_{i}$	and $ Z=\cup Z_{i,j}$.
	Then we have $\vert X\vert=(1+\beta^{\prime})km$, $\vert Y\vert= 2km$ and $\vert Z \vert=3k(k-1)m$. 
	For each $j\in[k]$, we partition $\cup_{i\in[k]\backslash\left\{ j \right\}}Z_{i,j}$ into a family $\mathcal{Z}_j$ of $3m$ transversal $(k-1)$-sets and take an arbitrary bijection $\phi_{j}:\mathcal{Z}_j\rightarrow I_{j}$.
	Moreover, we define a function $\varphi$ on $[3km]$ such that $\varphi(x):=\phi^{-1}_{j}(x)$ if $x\in I_{j}$.
	Let  $ T_{i}$ be the bipartite graph obtained by Lemma \ref{31} with vertex classes $X_{i}\cup Y_{i}$ and $I_{i}$, 
	and let $T=\cup_{i=1}^{k} T_{i}$. Then $T$ 
	is a bipartite graph between $X\cup Y$ and $[3km]$ with $\Delta(T)\leq40$.

	We claim that there exists a family $\left\{ A_{e} \right\}_{e\in E(T)}$ of pairwise vertex-disjoint subsets in $V(G)\backslash (X\cup Y\cup Z)$ such that for every $e=\left\{ w_{1},w_{2} \right\}\in E(T)$ with $w_{1}\in X\cup Y$ and $w_{2}\in [3km]$, the set $A_{e}$ is a $(C_{k}, 2k)$-absorber for the transversal $k$-set $\left\{ w_{1} \right\}\cup\varphi(w_{2})$. 
	Indeed otherwise, there exists an edge $e^{\prime}\in E(T)$ without such a subset.
	Recall that $m=\frac{qn}{1+\beta^{\prime}}$ and $\Delta(T)\leq40$,  then we have
	\begin{equation}
		\vert X\vert+\vert Y\vert+\vert Z\vert+\left\vert\bigcup_{e\in E(T)\backslash\left\{ e^{\prime}\right\}} A_{e} \right\vert \leq 4km+3km(k-1)+2k^2\vert E(T) \vert\leq 4k^2m+80k^2\cdot 3km\leq\frac{\tau n}{2}. \nonumber
	\end{equation}
	Since every transversal $k$-set  has at least $\tau n$ vertex-disjoint $(C_{k}, 2k)$-absorbers in $G$, we can choose one  in $V(G)\backslash (X \cup Y \cup Z\cup \bigcup_{e\in E(T)\backslash\left\{ e^{\prime}\right\}}A_{e})$ as the subset $A_{e^{\prime}}$,	a contradiction.

	Let $R=X \cup Y \cup Z\cup \bigcup_{e\in E(T)}A_{e}$. Then $\vert R \vert \leq \tau n$ and
	we claim that $R$ is a $\xi$-absorbing set in $G$.
	Indeed, for an arbitrary  balanced  subset $U\subseteq V(G)\backslash R$ with $\vert U\vert\leq\xi n$, we shall verify that $G[R\cup U]$ admits a $C_k$-factor. 
	Note that if there exist $Q_{i}\subseteq X_{i}$ with $\vert Q_{i} \vert=\beta^{\prime}m$ for $i\in[k]$ and   a transversal $C_{k}$-factor in $G[\bigcup_{i=1}^{k} Q_{i}\cup U]$,	then $G[R\cup U]$ contains a transversal $C_{k}$-factor.
	In fact	by setting $X_{i}^{\prime}=X_{i}\backslash Q_{i}$, Lemma \ref{31} implies that there is a perfect matching $M$ in $T$ between $ \bigcup_{i=1}^{k} X_{i}^{\prime}\cup Y$ and $[3km]$.
	For each edge $e=\left\{ w_{1},w_{2} \right\}\in M$ take a transversal $C_{k}$-factor in $G[\left\{ w_{1} \right\}\cup\varphi(w_{2})\cup A_{e}]$  and for each $e^\prime\in E(T)\backslash M$ take a transversal $C_{k}$-factor in $G[A_{e^\prime}]$, which  forms a transversal $C_{k}$-factor of $G[R\backslash \cup_{i=1}^{k} Q_{i}]$.
	Thus together with the above assumption, we can obtain a transversal $C_{k}$-factor in $G[R\cup U]$.
	
	Hence, it suffices to find the desired $Q_{i}$ as above.
	Recall that every $v\in V(G)$ has a subfamily $\mathcal{F}_{v}^{\prime}$ of at least $k\beta^{\prime}n$ sets in $\cup_{i=1}^{k} X_{i}$.
	By the choice that $\xi\ll\beta,\frac{1}{k}$ and thus $\vert U\vert\leq\xi n\leq \beta^{\prime}n$, one can greedily find a family $\mathcal{C}_{1}$ of vertex-disjoint copies of transversal $C_{k}$ covering $U$ with vertices in $\cup_{i=1}^{k}X_{i}$.
	Let $Q_{i1}:=X_{i}\cap V(\mathcal{C}_{1})$.
	Then we have $\vert Q_{i1}\vert=\frac{k-1}{k}\vert U  \vert $ and $\mathcal{C}_1$ is a transversal $C_{k}$-factor in $G[\bigcup_{i=1}^{k} Q_{i1}\cup U]$.
	Moreover, one can greedily pick a family $\mathcal{C}_{2}$ of		  $\beta^{\prime}m-\frac{k-1}{k}\vert U  \vert $ vertex-disjoint copies of transversal $C_{k}$ in $G[X\backslash V(\mathcal{C}_{1})]$.
	This is possible since   every vertex $v$ has at least  $k\beta^{\prime}n-(k-1)\vert U\vert\geq k(\beta^{\prime}m-\frac{(k-1)}{k}\vert U  \vert)$ sets from $\mathcal{F}_v$ that are disjoint from   $V(\mathcal{C}_{1})$ .
	Let $Q_{i2}:=X_{i}\cap V(\mathcal{C}_{2})$ and $Q_{i}:=Q_{i1}\cup Q_{i2}$.
	Then $Q_i$ is desired because $\mathcal{C}_{1}\cup \mathcal{C}_{2}$ is indeed a transversal $C_k$-factor of $G[\bigcup_{i=1}^k Q_i\cup U]$.  
	This completes the entire proof.
\end{proof}

\subsubsection{Proof of Lemma \ref{5}}

\begin{proof}[Proof of Lemma \ref{5} ]

	Given $k \in \mathbb{N}$ with $k\geq4$ and $\delta>\frac{2}{k}$, we set $\eta:=\delta-\frac{2}{k}$ and choose 
	$\frac{1}{n}\ll\alpha\ll\beta\ll\delta,\eta$. 
	Let $G=(V_{1},\ldots,V_{k},E)$ be  a spanning subgraph of the $n$-blow-up of $C_{k}$ with $\bar{\delta}(G)\geq \delta n$ and $\alpha^*_{\rm b}(G)<\alpha n$. 
	\begin{claim}\label{96}
		For each $i\in[k]$, $V_{i}$ is
		$(C_{k},\beta n,2)$-closed.
	\end{claim}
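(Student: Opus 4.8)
The plan is to verify the defining property of $(C_k,\beta n,2)$-closedness directly: fix $i\in[k]$, two distinct $u,v\in V_i$ (the case $u=v$ is easier — a single transversal $C_k$ through $u$ suffices), and an arbitrary $W\subseteq V(G)$ with $|W|=\beta n$, and exhibit an $F$-connector $S\subseteq V(G)\setminus W$ for $u,v$ with $|S|=2k-1$. The gadget I would use is $S=\{c\}\cup V(P_a)\cup V(P_b)$, where $c\in V_i\setminus(W\cup\{u,v\})$ and $P_a=a_{i+1}a_{i+2}\cdots a_{i-1}$, $P_b=b_{i+1}b_{i+2}\cdots b_{i-1}$ are two vertex-disjoint transversal paths through the parts $V_{i+1},V_{i+2},\ldots,V_{i-1}$ (all parts except $V_i$, indices modulo $k$), chosen so that $u$ is adjacent to both $a_{i+1}$ and $a_{i-1}$, $v$ is adjacent to both $b_{i+1}$ and $b_{i-1}$, and $c$ is adjacent to all four of $a_{i+1},a_{i-1},b_{i+1},b_{i-1}$. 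Given such a configuration, $\{u\}\cup S$ decomposes into the transversal copies $\{u\}\cup V(P_a)$ and $\{c\}\cup V(P_b)$ of $C_k$, while $\{v\}\cup S$ decomposes into $\{v\}\cup V(P_b)$ and $\{c\}\cup V(P_a)$; hence both $G[\{u\}\cup S]$ and $G[\{v\}\cup S]$ have transversal $C_k$-factors, which is exactly what is required.

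To select $c$ and the four endpoints, I would first fix disjoint sets $A^*_{i+1}\subseteq(N(u)\cap V_{i+1})\setminus W$ and $B^*_{i+1}\subseteq(N(v)\cap V_{i+1})\setminus W$, each of size $\alpha n$; this is possible since $\beta\ll\delta$ forces $|(N(u)\cap V_{i+1})\setminus W|,|(N(v)\cap V_{i+1})\setminus W|\ge\delta n-\beta n$, so one carves $A^*_{i+1}$ out of $N(u)\cap V_{i+1}$ and then $B^*_{i+1}$ out of what remains of $N(v)\cap V_{i+1}$. Fix $A^*_{i-1},B^*_{i-1}\subseteq V_{i-1}$ analogously. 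Since $\alpha^*_{\rm b}(G)<\alpha n$ and each of these four sets has size $\alpha n$ and lies in a part adjacent in $C_k$ to $V_i$, the set of vertices of $V_i$ having no neighbour in a fixed one of them has size less than $\alpha n$ — otherwise it would form an $(\alpha n,\alpha n)$-bipartite-hole with that set. Thus all but fewer than $4\alpha n$ vertices of $V_i$ have a neighbour in each of $A^*_{i+1},A^*_{i-1},B^*_{i+1},B^*_{i-1}$, so I may pick such a $c\in V_i\setminus(W\cup\{u,v\})$ and then $a_{i+1}\in N(c)\cap A^*_{i+1}$, $b_{i+1}\in N(c)\cap B^*_{i+1}$, $a_{i-1}\in N(c)\cap A^*_{i-1}$, $b_{i-1}\in N(c)\cap B^*_{i-1}$. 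These lie outside $W$, are pairwise distinct (the two target sets inside each of $V_{i+1}$ and $V_{i-1}$ are disjoint), and have the required adjacencies to $u,v,c$.

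It then remains to join $a_{i+1}$ to $a_{i-1}$ and $b_{i+1}$ to $b_{i-1}$ by vertex-disjoint transversal paths through the intermediate parts $V_{i+2},\ldots,V_{i-2}$, avoiding $W$ and all vertices chosen so far. For $k\ge5$ this is a direct application of Proposition \ref{90} to the segment of $C_k$ on parts $V_{i+2},\ldots,V_{i-2}$: take end-sets $(N(a_{i+1})\cap V_{i+2})\setminus(W\cup\{\text{chosen so far}\})$ and $(N(a_{i-1})\cap V_{i-2})\setminus(W\cup\{\text{chosen so far}\})$, which have size at least $\delta n-\beta n-O(k)\ge\alpha n$, and interior sets $V_s\setminus(W\cup\{\text{chosen so far}\})$ of size at least $n-\beta n-O(k)\ge 2\alpha n$, to obtain $P_a$; then repeat, additionally deleting $V(P_a)$, to obtain $P_b$ disjoint from $P_a$. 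For $k=4$ there is a single intermediate part $V_{i+2}$, and since here $\delta>\frac{2}{k}=\frac12$ we get $|N(a_{i+1})\cap N(a_{i-1})\cap V_{i+2}|\ge(2\delta-1)n=2\eta n$, which after deleting $W$ and the $O(1)$ other chosen vertices still leaves a vertex for $a_{i+2}$ and then a distinct one for $b_{i+2}$ (using $\beta\ll\eta$). This produces the desired $P_a,P_b$, completing the construction of $S$; as $i$ was arbitrary, every $V_i$ is $(C_k,\beta n,2)$-closed.

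The main obstacle is the middle step: one must commit to $c$ before knowing the path endpoints, yet still guarantee that $c$ has neighbours in all four linear-sized target sets — this is precisely where the bipartite-hole hypothesis $\alpha^*_{\rm b}(G)<\alpha n$ is used in an essential way, since a mere minimum-degree bound $\delta n$ with $\delta<\frac12$ cannot force $c$ to have a neighbour in $N(u)\cap V_{i+1}$. Everything else is routine bookkeeping: only $O(k)$ vertices are ever committed, so deleting $W$ together with the committed vertices never spoils the degree or bipartite-hole hypotheses needed to invoke Proposition \ref{90}.
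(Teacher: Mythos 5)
Your proposal is correct and follows essentially the same route as the paper's own proof. The gadget is identical: a single "hub" vertex (your $c$, the paper's $x$) together with two vertex-disjoint transversal paths spanning all parts other than $V_i$ (your $P_a,P_b$, which correspond exactly to $V(C_1)\setminus\{u\}$ and $V(C_2)\setminus\{v\}$ in the paper), where $u,v$ are each adjacent to the two ends of their own path and $c$ is adjacent to all four path-ends. The decomposition of $\{u\}\cup S$ and $\{v\}\cup S$ into two transversal $C_k$'s is the same "swap the hub" trick. You also locate $c$ the same way the paper locates $x$: fix four target sets in $V_{i\pm1}$ (inside $N(u)$, resp.\ $N(v)$), and use $\alpha^*_{\rm b}(G)<\alpha n$ to conclude that fewer than $\alpha n$ vertices of $V_i$ miss any fixed target, so all but fewer than $4\alpha n$ vertices see all four; and you build the paths via Proposition \ref{90} for $k\ge5$ and an inclusion–exclusion common-neighbour count for $k=4$, exactly as in the paper. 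The only differences are cosmetic (general index $i$ versus the paper's WLOG $i=1$; trimming the target sets to size $\alpha n$ rather than carrying them at size $\delta n/2$ before removing $W$).
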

	\begin{proof}[Proof of Claim \ref{96} ]
		
		Without loss of generality, we may assume that $i=1$.
		For any two vertices $u,v\in V_{1}$, since $\bar{\delta}(G)\geq \delta n$,
		we can choose four vertex-disjoint sets $D_{1}$, $D_{2}\subseteq V_{2}$ and $D_{3}, D_{4}\subseteq V_{k}$  each of size at least $\frac{\delta n}{2}$
		such that $D_{1}\subseteq N_{V_{2}}(u)$, $D_{2}\subseteq N_{V_{2}}(v)$, $D_{3}\subseteq N_{V_{k}}(u)$ and $D_{4}\subseteq N_{V_{k}}(v)$
		respectively.
		Given any vertex set $W\subseteq V(G)\backslash\left\{u,v \right\} $ of size at most  $\beta n$, 
		let $V^{\prime}_{i}=V_{i}\backslash W$  and $D^{\prime}_{j}=D_{j}\backslash W$ for  $i\in[k]$ and $j\in [4]$. Note that $\vert V^{\prime}_{i}\vert\geq n-\beta n\geq10\alpha n$ and $\vert D^{\prime}_{j}\vert\geq\frac{\delta n}{2}-\beta n\geq\alpha n$.

		Since $\alpha^*_{\rm b}(G)<\alpha n\leq\vert D_{j}^{\prime}\vert, \vert  V^{\prime}_{1}\vert$,  
		there exist subsets  $S_{j}\subseteq V^{\prime}_{1}$ of size at least $\vert V^{\prime}_{1}\vert-\alpha n$ such that  every vertex in
		$S_{j}$ has  at least one
		neighbor inside $D_{j}^{\prime}$. By the fact that $\vert S_{j}\vert\geq\vert V^{\prime}_{1}\vert-\alpha n>\frac{3}{4}\vert V^{\prime}_{1}\vert$, 
		we have that $S:=\bigcap_{j=1}^{4}S_{j}\neq\emptyset$.
		Arbitrarily choose $x\in S$, and therefore there exist vertices $y_{1}$, $y_{2}$, $y_{3}$, $y_{4}$ satisfying
		$y_{i}\in N_{D_{i}^{\prime}}(x)$ for $i\in [k]$.
		Note that $\vert N_{V^{\prime}_{3}}(y_{1})\vert$, $\vert N_{V^{\prime}_{k-1}}(y_{3})\vert\geq\delta n-\beta n\geq\alpha n$
		and $\vert V^{\prime}_{i}\vert\geq  10\alpha n$ for $i\in[k]$.  	When	$k>4$,	by applying
		Proposition \ref{90} with $X_{i}:=N_{V^{\prime}_{3}}(y_{1})$ and $X_{j}:=N_{V^{\prime}_{k-1}}(y_{3})$, we can obtain a transversal cycle $C_{1}$ passing through $u, y_{1}, y_{3}$.  When $k=4$, since  $\vert N_{V^{\prime}_{3}}(y_{1})\vert$, $\vert N_{V^{\prime}_{3}}(y_{3})\vert\geq\delta n-\beta n\geq(\frac{1}{2}+\eta-\beta) n>\frac{n}{2}$, we can easily find a common neighbor of $y_{1}, y_{3}$ and thus obtain a transversal cycle $C_{1}$ passing through $u, y_{1}, y_{3}$. 
	Similarly, we can obtain a transversal cycle $C_{2}$ in $V(G)\backslash(W\cup V(C_{1}))$ that passes through $v, y_{2}, y_{4}$.
 	In fact, the set $\left\{ x \right\}\cup\left(V(C_{1})\backslash\left\{ u \right\}\right)\cup \left(V(C_{2})\backslash\left\{ v \right\}\right)$ is a $C_{k}$-connector for $u$, $v$. Therefore by definition, $u$ is $(C_{k},\beta n,2)$-reachable to $v$.
		
	\end{proof}
	
	For every transversal $k$-subset $S\subseteq V(G)$, we greedily find as many pairwise disjoint $(C_{k},2k)$-absorbers for $S$ as possible.
	For convenience,
	we write $S=\left\{ s_{1}, s_{2},\ldots,s_{k} \right\}$ where $s_{i}\in V_{i}$ for $i\in[k] $.
	Let $\mathcal{A}=\left\{ A_{1}, A_{2},\ldots,A_{\ell} \right\} $ be a maximal family of such absorbers. Suppose to the contrary that $\ell< \frac{\beta n-k}{4k^2}$. Since each $A_{j}$ has size at most $2k^2$, we have $\left\vert \bigcup_{j=1}^{\ell}A_{j}\right\vert< \frac{\beta n-k}{2}$.

	Since $\alpha\ll\beta\ll\delta$,  we can easily  find a copy  $T$ of transversal $C_{k}$ in $V(G)\backslash(\bigcup_{j=1}^{\ell}A_{j}\cup S)$ and write $T=\left\{ t_{1}, t_{2},\ldots,t_{k} \right\}$ where  $t_{i}\in V_{i}$ for $i\in[k] $.
	By the closedness of $V_{i}$, we can pick a collection $\left\{ I_{1}, I_{2},\ldots,I_{k} \right\}$ of vertex-disjoint subsets in $V(G)\backslash(\bigcup_{j=1}^{\ell}A_{j}\cup S\cup T)$ such that each $I_{i}$ is a $C_{k}$-connector for $s_{i}$, $t_{i}$ with $\vert I_{i} \vert\leq 2k-1$. In fact, for any $1\leq k^{\prime}\leq k$, we have
	\begin{equation}
		\left\vert \bigcup_{j=1}^{\ell}A_{j}\cup\left(\bigcup_{i=1}^{k^{\prime}}I_{i}\right)\cup S\cup T\right\vert\leq\frac{\beta n-k}{2}+k(2k-1)+2k<\beta n.  \nonumber
	\end{equation}
	Therefore,
	we can choose such $I_{i}$ one by one  because $s_{i}$ and $t_{i}$ are $(C_{k},\beta n,2)$-reachable.
	At this point, it is easy to verify that $\bigcup_{i=1}^{k} I_{i}\cup T$
	is actually a $(C_{k},2k)$-absorber for $S$, contrary to the maximality of $\ell$.

\end{proof}

\bibliographystyle{abbrv}
\bibliography{ref}

\end{document}